\newcommand{\K}{\mathbb{K}}
\newcommand{\ot}{\otimes}
\DeclareMathOperator{\HH}{HH}
\DeclareMathOperator{\HHom}{Hom}
\DeclareMathOperator{\Img}{Im}
\DeclareMathOperator{\Ker}{Ker}
\newtheorem{theorem}{Theorem}[section]
\newtheorem{lemma}[theorem]{Lemma}
\theoremstyle{definition}
\newtheorem{definition}[theorem]{Definition}
\newtheorem{example}[theorem]{Example}
\theoremstyle{remark}
\newtheorem{remark}[theorem]{Remark}
\numberwithin{equation}{section}
\title[ Recurrence relations \& homotopy lifting maps]{Homotopy lifting maps on Hochschild cohomology and connections to deformation of algebras using reduction systems}
\author[T.\ Oke]{Tolulope Oke}
\address{Department of Mathematics \& Statistics, Wake Forest University, NC, USA}
\email{oket@wfu.edu}
\thanks{The author thanks Severin Barmeier for useful discussions and for assisting with the calculations in Section \ref{deform-g}.}
\subjclass[2020]{Primary 16E40; 16S37; 16S80, 16S15 }
\date{\today}
\begin{document}

\maketitle
%\thispagestyle{empty}
%\clearpage

\begin{abstract}
We describe the Gerstenhaber bracket structure on Hochschild cohomology of Koszul quiver algebras in terms of homotopy lifting maps. There is a projective bimodule resolution of Koszul quiver algebras that admits a comultiplicative structure. Introducing new scalars, we describe homotopy lifting maps associated to Hochschild cocycles using the comultiplicative structure. We show that the scalars can be described by some recurrence relations and we give several examples where these scalars appear in the literature. In particular, for a member of a family of quiver algebras, we describe Hochschild 2-cocycles and their associated homotopy lifting maps and determine the Maurer-Cartan elements of the quiver algebra in two ways: (i) by the use of homotopy lifting maps and (ii) by the use of a combinatorial star product that arises from the deformation of algebras using reduction systems.

\end{abstract}

\section{Introduction}\label{introduction}

The Hochschild cohomology $\HH^*(\Lambda)$ of an associative algebra $\Lambda$ possesses a multiplicative map called the cup product making it into a graded commutative ring. The ring structure of Hochschild cohomology of certain path algebras were determined using quiver techniques. For instance, if a path algebra is Koszul, its resolution possesses a comultiplicative structure and the cup product structure on its Hochschild cohomology can be presented using this comultiplicative structure. This cup product was described in \cite{ROKA}.

In addition to the cup product on Hochschild cohomology ring is the Gerstenhaber bracket making $\HH^*(\Lambda)$ into a graded Lie/Gerstenhaber algebra. The bracket plays an important role in the theory of deformation of algebras. The theory of deformation of algebras employs techniques in algebraic and noncommutative geometry to describe variations of the associative multiplicative structure on any algebra. In a recent article, S. Barmeier and Z. Wang \cite{SZ} have introduced a technique for finding families of deformation of quiver algebras with relations using \textit{reduction} systems. Reduction systems were introduced by Bergman \cite{Berg} in the late seventies. In particular, for an algebra $\Lambda=kQ/I$ where $Q$ is a finite quiver, there is associated a reduction system $R$ useful in determining a projective bimodule resolution of $\Lambda$ reminiscent of the Bardzell resolution \cite{CS}. It was shown in \cite{SZ} that there is an equivalence of formal deformations between (i) deformations of the associative algebra $\Lambda$, (ii) deformations of the reduction system $R$ and (iii) deformations of the relations in $I$.

The Gerstenhaber bracket can be difficult to compute in general settings. Several works have been carried out to interpret the bracket as well as make computations of the bracket accessible for a large class of algebras for instance in \cite{NW, KMOOW}. In \cite{VOL}, Y. Volkov introduced a method in which the bracket is defined in terms of a homotopy lifting map. This method works for any arbitrary projective bimodule resolution of the algebra. In earlier works \cite{TNO2}, we present a general formula for homotopy lifting maps associated to cocycles on Hochschild cohomology of Koszul path algebras.

The resolution introduced in \cite{ROKA} had scalars $c_{p,j}(n,i,r)$ appearing in the definition of the differentials on the resolution. These scalars made it possible to give a closed formula for the cup product structure on Hochschild cohomology. In Section \ref{main-results}, we present new scalars $b_{m,r}(m-n+1,s)$ associated to homotopy lifting maps on Hochschild cocycles using the scalars $c_{p,j}(n,i,r)$ of the comultiplicative relations. We show that the scalars $b_{m,r}(m-n+1,s)$ can be described using some recurrence relations and present the Gerstenhaber bracket structure using these scalars. We give several examples where these scalars appear in the literature in Section \ref{example-section}. In Section \ref{finding-mc}, we introduce a family of quiver algebras that has been extensively studied in \cite{TNO, TNO2}. For the algebra $A_1$  from the family, we find Hochschild 2-cocycles and their associated homotopy lifting maps. We show that $\HH^2(A_1)$ is generated as a vector space by five Maurer-Cartan elements.

Relevant results about Hochschild cohomology, Gerstenhaber bracket, quiver algebras and deformation of algebras using reduction systems were recalled in the preliminaries. The deformation of an algebra involves altering the associative multiplicative structure on the algebra. The candidates for determining how the multiplicative structure is altered can be described using a combinatorial star product $(\star)$\cite{SZ} and this product can be used to describe Maurer-Cartan elements. In Section \ref{deform-g}, we describe the Maurer-Cartan elements of the algebra $A_1$  using $(\star)$, showing that $\HH^2(A_1)$ is a five dimensional $k$-vector space.

\section{Preliminaries}\label{prelim}
\textbf{The Hochschild cohomology} of an associative $k$-algebra $\Lambda$ was originally defined using the following projective resolution known as the bar resolution.
\begin{equation}\label{bar}
\mathbb{B}_{\bullet}:  \qquad\cdots \rightarrow \Lambda^{\ot (n+2)}\xrightarrow{\;\delta_n\; }\Lambda^{\ot (n+1)}\xrightarrow{\delta_{n-1}} \cdots \xrightarrow{\;\delta_2\;}\Lambda^{\ot 3}\xrightarrow{\;\delta_1\;}\Lambda^{\ot 2}\;(\;\xrightarrow{\mu} \Lambda)
\end{equation}
where $\mu$ is multiplication and the differentials $\delta_n$ are given by
\begin{equation}\label{bar-diff}
\delta_n(a_0\otimes a_1\otimes\cdots\otimes a_{n+1}) = \sum_{i=0}^n (-1)^i a_0\otimes\cdots\otimes a_ia_{i+1}\otimes\cdots\otimes a_{n+1}
\end{equation}
for all $a_0, a_1,\ldots, a_{n+1}\in \Lambda$. This resolution consists of $\Lambda$-bimodules or left modules over the enveloping algebra $\Lambda^e = \Lambda\ot\Lambda^{op}$, where $\Lambda^{op}$ is the opposite algebra. The resolution is sometimes written $\mathbb{B}_{\bullet}\xrightarrow{\mu}\Lambda$ with $\mu$ referred to as the augmentation map. Let $M$ be a finitely generated left $\Lambda^e$-module, the Hochschild cohomology of $\Lambda$ with coefficients in $M$ denoted $ \HH^*(\Lambda,M)$ is obtained by applying the functor $\HHom_{\Lambda^e}(-,M)$ to the complex $\mathbb{B}_{\bullet}$, and then taking the cohomology of the resulting cochain complex. That is 
$$ \HH^*(\Lambda,M) := \bigoplus_{n\geq 0} \HH^n(\Lambda,M)=\bigoplus_{n\geq 0} \text{H}^n(\HHom_{\Lambda^e}(\mathbb{B}_{n},M)). $$
If we let $M=\Lambda$, we then define $\HH^*(\Lambda) :=\HH^*(\Lambda,\Lambda) $ to be the Hochschild cohomology of $\Lambda$. An element $\chi\in\HHom_{\Lambda^e}(\mathbb{B}_m,\Lambda)$ is a cocycle if $(\delta^{*}(\chi))(\cdot) := \chi\delta(\cdot)=  0.$ There is an isomorphism of the abelian groups $\HHom_{\Lambda^e}(\mathbb{B}_m,\Lambda)\cong\HHom_{k}(\Lambda^{\otimes m},\Lambda),$ so we can also view $\chi$ as an element of $\HHom_{k}(\Lambda^{\otimes m},\Lambda)$.

\textbf{The Gerstenhaber bracket} of two cocycles $\chi\in\HHom_{k}(\Lambda^{\otimes m},\Lambda)$ and $\theta\in \HHom_{k}(\Lambda^{\otimes n},\Lambda)$ at the chain level is given by
\begin{equation}\label{gbrac}
[\chi,\theta] = \chi\circ \theta - (-1)^{(m-1)(n-1)} \theta\circ \chi
\end{equation}
where $\chi\circ \theta = \sum_{j=1}^m (-1)^{(n-1)(j-1)}\chi\circ_j \theta$ with
\begin{multline*}
(\chi\circ_j \theta)(a_1\otimes \cdots \otimes a_{m+n-1}) \\
= \chi(a_1\otimes\cdots\otimes a_{j-1}\otimes \theta(a_j\otimes\cdots\otimes a_{j+n-1}) \otimes a_{j+n}\otimes \cdots\otimes a_{m+n-1}). \end{multline*}
This induces a well defined map  $[\cdot\;,\cdot] : \HH^{m}(\Lambda) \times \HH^{n}(\Lambda) \rightarrow \HH^{m+n-1}(\Lambda)$ on cohomology.

\textbf{Gerstenhaber bracket using homotopy lifting:} We present an equivalent definition of the Gerstenhaber bracket presented by Y. Volkov in \cite{VOL} and reformulated with a sign change by S. Witherspoon in Theorem \eqref{gbrac1}. We assume that $A$ is an algebra over the field $k$ and take $\mathbb{P}\xrightarrow{\mu_{\mathbb{P}}}A$ to be a projective resolution of $A$ as an $A^e$-module with differential $d^{\mathbb{P}}$ and augmentation map $\mu_{\mathbb{P}}.$ We take $\textbf{d}$ to be the differential on the Hom complex $\HHom_{\Lambda^e}(\mathbb{P},\mathbb{P})$ defined for any degree $n$ map $g:\mathbb{P}\rightarrow\mathbb{P}[-n]$ as 
$$\textbf{d}(g):= d^{\mathbb{P}}g - (-1)^ng d^{\mathbb{P}}$$
where $\mathbb{P}[-n]$ is a shift in homological dimension with $(\mathbb{P}[-n])_m = \mathbb{P}_{m-n}$.
In the following definition, the notation $\sim$ is used for two cocycles that are cohomologous, that is, they differ by a coboundary. %Although this definition is general for any projective resolution of $\Lambda$, we simply give the definition in terms of $\K$ and the diagonal map $\Delta_{\K}$.
\begin{definition}\label{homolift}
Let $\Delta_{\mathbb{P}}:\mathbb{P}\rightarrow \mathbb{P}\otimes_{A}\mathbb{P}$ be a chain map lifting the identity map on $A\cong A\ot_{A}A$ and suppose that $\eta\in\HHom_{A^e}(\mathbb{P}_n, A)$ is a cocycle. A module homomorphism $\psi_\eta :\mathbb{P} \rightarrow \mathbb{P}[1- n]$ is called a \textbf{homotopy lifting} map of $\eta$ with respect to $\Delta_{\mathbb{P}}$  if
\begin{align}\label{homo-defi}
\textbf{d}(\psi_\eta) &= (\eta\ot 1_{\mathbb{P}} - 1_{\mathbb{P}}\ot \eta)\Delta_{\mathbb{P}} \qquad\text{ and } \\ 
\mu_{\mathbb{P}}\psi_\eta &\sim \; (-1)^{n-1} \eta \psi  \notag
\end{align}
for some $\psi:\mathbb{P}\rightarrow \mathbb{P}[1]$ for which $\textbf{d}(\psi) =  (\mu_{\mathbb{P}}\ot 1_{\mathbb{P}} - 1_{\mathbb{P}}\ot \mu_{\mathbb{P}})\Delta_{\mathbb{P}}$. 
\end{definition}

\begin{example}\label{example-homo}
Let us consider a homotopy lifting formula for a cocycle $\beta$ using the bar resolution $\mathbb{B}$. Suppose that $\beta\in\HHom_{\Lambda^e}(\mathbb{B}_{n},A)\cong\HHom_{k}(A^{\ot n},A)$, then one way to define a homotopy lifting map $\psi_\beta:\mathbb{B}\rightarrow\mathbb{B}[1-n]$ for the cocycle $\beta$ is the following:
\begin{multline*}
\psi_\beta(1\ot a_1\ot\cdots\ot a_{m+n-1}\ot 1) \\
= \sum_{i=1}^{m}(-1)^{(m-1)(i-1)}1\ot a_1\ot\cdots\ot a_{i-1}\ot 
g(a_{i}\ot\cdots\ot a_{i+n-1})
\ot a_{i+n}\ot\cdots\ot a_{m+n-1}\ot 1.
\end{multline*}

We compute an example in which $\beta\in\HHom_{k}(\Lambda^{\ot 2},\Lambda)$. In degree $3$, $\psi_{\beta}:\mathbb{B}_{3}\xrightarrow{}\mathbb{B}_{2}$. Using the differentials on the bar resolution given in Equation \eqref{bar-diff} and the diagonal map $\Delta_{\mathbb{B}}$ later given in Equation \eqref{diag-bar}, we have
\begin{align*}
&\delta\psi_\beta(1\ot a_1\ot a_2\ot a_3\ot 1) = \beta(a_1\ot a_2)\ot a_3\ot 1 -1\ot \beta(a_1\ot a_2) a_3\ot 1 \\
&+ 1\ot \beta(a_1\ot a_2)\ot a_3 - a_1\ot \beta(a_2\ot a_3)\ot 1 \\
&+ 1\ot a_1 \beta( a_2\ot a_3)\ot 1  - 1\ot a_1\ot \beta(a_2\ot a_3) \qquad\text{and}\\
&\psi_\beta\delta(1\ot a_1\ot a_2\ot a_3\ot 1) = a_1\ot \beta(a_2\ot a_3)\ot 1 - 1\ot \beta(a_1 a_2\ot a_3)\ot 1 \\
&+1\ot \beta(a_1\ot a_2 a_3)\ot 1 - 1\ot \beta(a_1\ot a_2)\ot a_3.
\end{align*}
Therefore $(\delta\psi_\beta + \psi_\beta\delta) (1\ot a_1\ot a_2\ot a_3\ot 1) =\beta(a_1\ot a_2)\ot a_3\ot 1 - 1\ot a_1\ot \beta(a_2\ot a_3)$. On the other hand, 
\begin{align*}
&(\beta\ot 1 - 1\ot\beta)\Delta_{\mathbb{B}}(1\ot a_1\ot a_2\ot a_3\ot 1) \\
&= (\beta\ot 1 - 1\ot\beta)\Big( (1\ot 1)\ot_{\Lambda}(1\ot a_1\ot a_2\ot a_3\ot 1) + \\
&(1\ot a_1\ot1)\ot_{\Lambda}(1\ot a_2\ot a_3\ot 1) + (1\ot a_1\ot a_2\ot 1)\ot_{\Lambda}(1\ot a_3\ot 1) \\
&+ (1\ot a_1\ot a_2\ot a_3\ot 1)\ot_{\Lambda}(1\ot 1) \Big)\\
&=\beta(a_1\ot a_2)\ot a_3\ot 1 - 1\ot a_1\ot \beta(a_2\ot a_3).
\end{align*}
So we see that Equation \eqref{homo-defi} holds in degree 3 i.e.
$$\delta\psi_\beta - (-1)^{2-1} \psi_\beta \delta = (\beta\otimes 1 - 1\otimes \beta)\Delta_{\mathbb{B}}.$$
\end{example}

\begin{remark}\label{rema-koszul-homo}
Suppose that $\K$ is the Koszul resolution, then it is a differential graded coalgebra i.e. $(\Delta_{\K}\ot 1_{\K})\Delta_{\K} = (1_{\K}\ot \Delta_{\K})\Delta_{\K}$ and $(d\ot 1 + 1\ot d)\Delta_{\K} = \Delta_{\K}d$. Furthermore, the augmentation map $\mu:\K\rightarrow\Lambda$ makes $(\mu\ot 1_{\K})\Delta_{\K} - (1_{\K}\ot \mu)\Delta_{\K} = 0.$ We can therefore set $\psi = 0$ in the second part of Equation \eqref{homo-defi}, so that we have $\mu\psi_\eta \sim 0.$ Next, we set $\psi_\eta(\K_{n-1})=0$ and the second relations of Equation~\eqref{homo-defi} is satisfied. To check if a map is a homotopy lifting map, it is sufficient to verify the first equation in \eqref{homo-defi} if the resolution is Koszul.
\end{remark}
The following is a theorem of Y. Volkov which is equivalent to the definition of the bracket presented earlier in Equation \eqref{gbrac}.
\begin{theorem}\cite[Theorem 4]{VOL}\label{gbrac1}
Let $(\mathbb{P},\mu_{\mathbb{P}})$ be a $A^e$-projective resolution of the algebra $A$, and let $\Delta_{\mathbb{P}}:\mathbb{P}\xrightarrow{} \mathbb{P}\ot_{A}\mathbb{P}$ be a diagonal map. Let $\eta:\mathbb{P}_n\xrightarrow{}A$ and $\theta:\mathbb{P}_m\xrightarrow{}A$ be cocycles representing two classes. Suppose that $\psi_\eta$ and $\psi_\theta$ are homotopy liftings for $\eta$ and $\theta$ respectively. Then the Gerstenhaber bracket of the classes of $\eta$ and $\theta$ can be represented by the class of the element 
\begin{equation*}
[\eta, \theta]_{\Delta_{\mathbb{P}}} = \eta\psi_\theta - (-1)^{(m-1)(n-1)}\theta\psi_\eta.
\end{equation*}
\end{theorem}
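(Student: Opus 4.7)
The plan is to prove this theorem in three stages: first, verify that the expression $\eta\psi_\theta - (-1)^{(m-1)(n-1)}\theta\psi_\eta$ is a cocycle in $\HHom_{A^e}(\mathbb{P}_{m+n-1}, A)$; second, show that its cohomology class is independent of the choices of diagonal map $\Delta_{\mathbb{P}}$ and of the homotopy liftings $\psi_\eta, \psi_\theta$; and third, identify the resulting class with the Gerstenhaber bracket of Equation \eqref{gbrac} via comparison with the bar resolution.

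For the first stage, I would apply the Hom-complex differential to $\eta\psi_\theta - (-1)^{(m-1)(n-1)}\theta\psi_\eta$. Since $\eta$ and $\theta$ are cocycles, the Leibniz rule reduces the computation to $\eta\,\textbf{d}(\psi_\theta)$ and $\theta\,\textbf{d}(\psi_\eta)$ up to sign. Substituting the defining relation from Equation \eqref{homo-defi} produces four summands of the forms $\eta(\theta\ot 1_{\mathbb{P}})\Delta_{\mathbb{P}}$, $\eta(1_{\mathbb{P}}\ot\theta)\Delta_{\mathbb{P}}$, and their $\eta\leftrightarrow\theta$ counterparts. Graded-commutativity of the cup product on cocycles (which at the cochain level is expressed via the diagonal $\Delta_{\mathbb{P}}$) forces these terms to pair off with the correct Koszul signs, yielding a coboundary and hence cocyclicity of the expression.

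The second stage is where I expect the main obstacle. Any two diagonal maps lifting the identity on $A\cong A\ot_A A$ are chain homotopic by the comparison theorem, and similarly any two homotopy liftings of the same cocycle differ by a map satisfying controlled conditions. I would build explicit chain homotopies level by level using projectivity of $\mathbb{P}$, carefully tracking how the auxiliary map $\psi$ from the second relation of Definition \ref{homolift} interacts with the bilinear expression. The delicate point is that the defining condition $\textbf{d}(\psi_\eta) = (\eta\ot 1_{\mathbb{P}} - 1_{\mathbb{P}}\ot\eta)\Delta_{\mathbb{P}}$ determines $\psi_\eta$ only up to a cocycle in the Hom complex, and one must show that any such ambiguity contributes only coboundaries to the bracket. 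This is precisely why the auxiliary compatibility $\mu_{\mathbb{P}}\psi_\eta \sim (-1)^{n-1}\eta\psi$ is built into Definition \ref{homolift}: it rigidifies $\psi_\eta$ just enough to make the class of $\eta\psi_\theta - (-1)^{(m-1)(n-1)}\theta\psi_\eta$ well defined.

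For the third stage, invoke the standard comparison theorem to obtain mutually quasi-inverse $A^e$-chain maps $F:\mathbb{P}\to\mathbb{B}$ and $G:\mathbb{B}\to\mathbb{P}$ lifting the identity on $A$. The computation in Example \ref{example-homo} already verifies that for the bar resolution with its canonical diagonal $\Delta_{\mathbb{B}}$, the explicit formula for $\psi_\beta$ realizes precisely the classical circle operation appearing in Equation \eqref{gbrac}. Hence $\eta\psi_\theta - (-1)^{(m-1)(n-1)}\theta\psi_\eta$ recovers $\eta\circ\theta - (-1)^{(m-1)(n-1)}\theta\circ\eta = [\eta,\theta]$ when $\mathbb{P} = \mathbb{B}$. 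Transporting this identity along $F$ and $G$, and invoking the independence result of the second stage to eliminate the resulting choice-dependent corrections, yields the claim for arbitrary $\mathbb{P}$.
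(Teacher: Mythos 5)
The first thing to note is that the paper offers no proof of this statement: it is quoted directly from Volkov \cite{VOL} (Theorem 4 there), so there is no in-paper argument to compare yours against. The relevant benchmark is Volkov's original proof, and your three-stage outline (cocyclicity of the expression, independence of choices, identification with the classical bracket via the bar resolution) does follow that general strategy. But as written your proposal is a plan rather than a proof, and it contains one misattributed mechanism and one genuine gap.

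The misattribution is in your first stage. What makes the four cross terms cancel is \emph{not} graded commutativity of the cup product: at the cochain level graded commutativity holds only up to coboundary, which would leave you with a coboundary of unknown form rather than exact cancellation. The actual mechanism is that $\eta(\theta\ot 1_{\mathbb{P}})\Delta_{\mathbb{P}}$ and $\theta(1_{\mathbb{P}}\ot\eta)\Delta_{\mathbb{P}}$ are \emph{literally the same cochain} up to a Koszul sign --- both are $(\theta\ot\eta)\Delta_{\mathbb{P}}$ composed with $A\ot_A A\cong A$ --- and likewise for the other pair, so the combination $\textbf{d}\bigl(\eta\psi_\theta-(-1)^{(m-1)(n-1)}\theta\psi_\eta\bigr)$ vanishes on the nose once the signs are tracked. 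The genuine gap is your second stage, which you correctly identify as the crux but for which you supply only intentions. Concretely: two homotopy liftings of $\eta$ differ by a degree-$(1-n)$ cocycle $\sigma$ of the complex $\HHom_{A^e}(\mathbb{P},\mathbb{P})$; you must show that the auxiliary condition $\mu_{\mathbb{P}}\psi_\eta\sim(-1)^{n-1}\eta\psi$ forces the class of $\sigma$ in $H^{n-1}(\HHom_{A^e}(\mathbb{P},\mathbb{P}))\cong\HH^{n-1}(A)$ to vanish, so that $\sigma=\textbf{d}(h)$ and $\theta\sigma=\pm\,\textbf{d}(\theta h)$ is a coboundary, and you must give a separate homotopy argument for a change of diagonal map. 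None of this is carried out, and your third stage silently depends on it: transporting along comparison maps $F,G$ requires knowing that the induced composites are again homotopy liftings with respect to the transported diagonal, which is exactly an instance of the unproved independence statement. Until stage two is made precise, the argument does not close.
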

%\begin{proof}
%See \cite[Section 6.3]{HCA} or \cite{VOL}.
%\end{proof}

\textbf{Quiver algebras:} A quiver is a directed graph with the allowance of loops and multiple arrows. A quiver $Q$ is sometimes denoted as a quadruple $(Q_0,Q_1,o,t)$ where $Q_0$ is the set of vertices in $Q$, $Q_1$ is the set of arrows in $Q$, and $o,t: Q_1 \longrightarrow Q_0$ are maps which assign to each arrow $a\in Q_1$, its origin vertex $o(a)$ and terminal vertex $t(a)$ in $Q_0$. A path in $Q$ is a sequence of arrows $a = a_1 a_2 \cdots a_{n-1} a_n $ such that the terminal vertex of $a_{i}$ is the same as the origin vertex of $a_{i+1}$, using the convention of concatenating paths from left to right. The quiver algebra or path algebra $kQ$ is defined as a vector space having all paths in $Q$ as a basis. Vertices are regarded as paths of length $0$, an arrow is a path of length $1$, and so on. We take multiplication on $kQ$ as concatenation of paths. Two paths $a$ and $b$ satisfy $ab=0$ if $t(a)\neq o(b)$. This multiplication defines an associative algebra over $k$. %This algebra has a unit element if and only if the quiver has only finitely many vertices. 
By taking $kQ_i$ to be the $k$-vector subspace of $kQ$ with paths of length $i$ as basis, $kQ = \bigoplus_{i\geq 0} kQ_i$ can be viewed as an $\mathbb{N}$-graded vector space. Two paths are parallel if they have the same origin and terminal vertex. A relation on a quiver $Q$ is a linear combination of parallel paths in $Q$. A quiver together with a set of relations is called a quiver with relations. Letting $I$ be an ideal of the path algebra $kQ$, we denote by $(Q,I)$ the quiver $Q$ with relations $I$. The quotient $\Lambda  = kQ/I$ is called the quiver algebra associated with $(Q,I)$. Suppose that $\Lambda$ is graded by positive integers and is Koszul, the degree $0$ component $\Lambda_0$ is isomorphic to $k$ or copies of $k$ and $\Lambda_0$ has a linear graded projective resolution $\mathbb{L}$ as a right $\Lambda$-module \cite{HCA,MV}.

An algorithmic approach to finding such a minimal projective resolution $\mathbb{L}$ of $\Lambda_0$ was given in~\cite{AAR}. The modules $\mathbb{L}_{n}$ are right $\Lambda$-modules for each $n$. There is a ``comultiplicative structure" on $\mathbb{L}$ and this structure was used to find a minimal projective resolution $\K\rightarrow\Lambda$ of modules over the enveloping algebra of $\Lambda$ in~\cite{ROKA}. A non-zero element $x\in kQ$ is called uniform if it is a linear combination of paths each having the same origin vertex and the same terminal vertex: In other words, $x=\sum_{j} c_j w_j$ with scalars $c_j\neq 0$ for all $j$ and each path $w_j$ are of equal length having the same origin vertex and the same terminal vertex. For $R=kQ$, it was shown in \cite{AAR} that there are integers $t_n$  and uniform elements $\{f^n_i\}_{i=0}^{t_n}$ such that the right projective resolution $\mathbb{L}\rightarrow\Lambda_0$ is obtained from a filtration of $R$. This filtration is given by the following nested family of right ideals: 
$$\cdots \subseteq \bigoplus_{i=0}^{t_n} f^n_iR \subseteq \bigoplus_{i=0}^{t_{n-1}} f^{n-1}_iR\subseteq \cdots \subseteq\bigoplus_{i=0}^{t_1} f^1_iR\subseteq \bigoplus_{i=0}^{t_0} f^0_iR = R$$
where for each $n$, $\mathbb{L}_n = \bigoplus_{i=0}^{t_n} f^n_iR / \bigoplus_{i=0}^{t_n} f^n_iI$ and the differentials on $\mathbb{L}$ are induced by the inclusions $\bigoplus_{i=0}^{t_n} f^n_iR\subseteq \bigoplus_{i=0}^{t_{n-1}} f^{n-1}_iR$. Furthermore, it was shown in \cite{AAR} that with some choice of scalars, the $\{f^n_i\}_{i=0}^{t_n}$ satisfying the comultiplicative equation of \eqref{comult-struc} make $\mathbb{L}$ minimal. In other words, for $0\leq i\leq t_n$, there are scalars $c_{pq}(n,i,r)$ such that
%uniform elements $\{f^n_i\}_{i=0}^{t_n}$ (called paths of length $n$ ) of $kQ$ such that
\begin{equation}\label{comult-struc}
f^n_i = \sum_{p=0}^{t_r}\sum_{q=0}^{t_{n-r}}c_{pq}(n,i,r) f^r_p f^{n-r}_q
\end{equation}
holds and $\mathbb{L}$ is a minimal resolution. To construct the above multiplicative equation for example, we can take $\{f^0_i\}_{i=0}^{t_0}$ to be the set of vertices, $\{f^1_i\}_{i=0}^{t_1}$ to be the set of arrows, $\{f^2_i\}_{i=0}^{t_2}$ to be the set of uniform relations generating the ideal $I$, and define $\{f^n_i\}_{i=0}^{t_n}(n\geq 3)$ recursively, that is in terms of $f^{n-1}_i$ and $f^1_j$. We presented the comultiplicative structure of a family of quiver algebras in \cite{TNO} and use the homotopy lifting technique to show that for some members of the family, the Hochschild cohomology ring modulo the weak Gerstenhaber ideal generated by homogeneous nilpotent elements is not finitely generated.

The resolution $\mathbb{L}$ and the comultiplicative structure~\eqref{comult-struc} were used to construct a minimal projective resolution $\K\rightarrow \Lambda$ of modules over the enveloping algebra $\Lambda^e=\Lambda\otimes \Lambda^{op}$ on which we now define Hochschild cohomology. This minimal projective resolution $\K$ of $\Lambda^e$-modules associated to $\Lambda$ was given in \cite{ROKA} and now restated with slight notational changes below.

\begin{theorem}\cite[Theorem 2.1]{ROKA}\label{mainres}
Let $\Lambda=kQ/I$ be a Koszul algebra, and let $\{f^n_i\}_{i=0}^{t_n}$ define a minimal resolution of $\Lambda_0$ as a right $\Lambda$-module. A minimal projective resolution $(\K, d)$ of $\Lambda$ over $\Lambda^e$ is given by
\begin{equation*}
\mathbb{K}_n = \bigoplus_{i=0}^{t_n}\Lambda o(f_i^n)\otimes_k t(f_i^n)\Lambda
\end{equation*} 
for $n\geq 0$, where the differential $d_n:\K_n\xrightarrow{}\K_{n-1}$ applied the basis element $\varepsilon^n_i = (0,\ldots,0,o(f^n_i)\otimes_k t(f^n_i),0,\ldots,0),$ $0\leq i\leq t_n$  with $o(f^n_i)\otimes_k t(f^n_i)$  in the $i$-th position, is given by
\begin{equation}\label{diff-k}
d_n(\varepsilon^n_i ) = \sum_{j=0}^{t_{n-1}}\Big( \sum_{p=0}^{t_1}c_{p,j}(n,i,1) f_p^1 \varepsilon^{n-1}_j 
+ (-1)^n \sum_{q=0}^{t_1}c_{j,q}(n,i,n-1)\varepsilon^{n-1}_j  f_q^1 \Big) %\tag{~\cite{ROKA}Theorem 2.1,~\cite{MSKA} page 5}
\end{equation}
and $d_0:\K_0\xrightarrow{}\Lambda$ is the multiplication map. In particular, $\Lambda$ is a linear module over $\Lambda^e$.
\end{theorem}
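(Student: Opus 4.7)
The plan is to verify four properties of the proposed complex $(\K, d)$: (i) each $\K_n$ is projective over $\Lambda^e$; (ii) the formula \eqref{diff-k} defines a genuine $\Lambda^e$-module homomorphism; (iii) $d\circ d = 0$; and (iv) the augmentation $\K\xrightarrow{d_0}\Lambda$ is a quasi-isomorphism. Minimality and the claim that $\Lambda$ is linear over $\Lambda^e$ will then drop out of the combinatorics of the construction.

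Projectivity of $\K_n=\bigoplus_i \Lambda o(f^n_i)\ot_k t(f^n_i)\Lambda$ is immediate since each summand is a direct summand of the free $\Lambda^e$-module $\Lambda\ot_k\Lambda$. For well-definedness I would invoke uniformity of each $f^n_i$: the scalar $c_{p,j}(n,i,1)$ is forced to vanish unless $o(f^1_p)=o(f^n_i)$ and $t(f^1_p)=o(f^{n-1}_j)$, so the summand $f^1_p\varepsilon^{n-1}_j$ genuinely lives in the bimodule summand $\Lambda o(f^{n-1}_j)\ot_k t(f^{n-1}_j)\Lambda\subset\K_{n-1}$; the right-hand terms are handled symmetrically.

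The identity $d^2=0$ is where the comultiplicative relation \eqref{comult-struc} does its work. Applied to $\varepsilon^n_i$, the composition $d_{n-1}d_n$ produces four sums indexed by whether each of the two produced $f^1$-factors lands on the left or on the right. The two mixed sums both compute a common expansion of $f^n_i$ of the form $\sum c_{p,j}(n,i,1)\,c_{k,q}(n-1,j,n-2)\,f^1_p f^{n-2}_k f^1_q$, which by associativity of \eqref{comult-struc} also equals $\sum c_{j,q}(n,i,n-1)\,c_{p,k}(n-1,j,1)\,f^1_p f^{n-2}_k f^1_q$; the opposite signs $(-1)^{n-1}$ and $(-1)^n$ carried by the two applications of $d$ then force pairwise cancellation. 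The pure (left--left and right--right) sums reassemble, via a second application of \eqref{comult-struc}, into multiples of $f^2_p\varepsilon^{n-2}_q$ and $\varepsilon^{n-2}_q f^2_p$; since each $f^2_p$ is by construction a defining relation in $I$, these contributions are zero in $\Lambda$. Tracking signs across the four terms is routine but must be executed with care.

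The principal obstacle is exactness. My plan is to compare $\K$ with the one-sided resolution $\mathbb{L}\to\Lambda_0$ of \cite{AAR}. Tensoring $\K$ on the right with the left $\Lambda$-module $\Lambda_0$ annihilates the right-hand terms of \eqref{diff-k} (each $f^1_q$ acts as zero on $\Lambda_0$) and yields a complex of left $\Lambda$-modules whose degree-$n$ term identifies with $\bigoplus_i\Lambda o(f^n_i)$, with the differential induced by the filtration inclusions $\bigoplus f^n_i R\subseteq\bigoplus f^{n-1}_i R$ that define $\mathbb{L}$; hence $\K\ot_\Lambda\Lambda_0$ is acyclic by hypothesis, and the symmetric computation $\Lambda_0\ot_\Lambda\K$ is handled analogously. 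Since $\Lambda$ is Koszul, both $\mathbb{L}$ and $\K$ are concentrated in a single internal degree per homological level, and a standard internal-degree/Tor argument promotes these one-sided acyclicities to acyclicity of $\K\to\Lambda$ as a bimodule complex. Minimality then follows from $d(\K_n)\subseteq\mathrm{rad}(\Lambda^e)\K_{n-1}$ because every $f^1_p,f^1_q$ is an arrow of positive length, and linearity over $\Lambda^e$ is a direct consequence of the $f^n_i$ sharing a common length $n$.
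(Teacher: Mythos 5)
This theorem is imported verbatim (up to notation) from \cite{ROKA}; the paper gives no proof of it, so there is no internal argument to compare yours against. Your outline does, however, follow the same route as the original proof in \cite{ROKA} (building on \cite{AAR}): projectivity because each summand splits off $\Lambda\ot_k\Lambda$, the identity $d^2=0$ from coassociativity of the scalars $c_{p,q}(n,i,r)$ combined with the vanishing of the relations $f^2_j$ in $\Lambda$, and exactness by reduction to the one-sided resolution $\mathbb{L}$ together with the standard lemma that, for a graded algebra with semisimple degree-zero part, a complex of graded projective bimodules augmented over $\Lambda$ is exact once it is exact after applying $\Lambda_0\ot_\Lambda -$. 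Two points deserve more care than your sketch gives them. First, the cancellation of the mixed terms in $d^2$ needs the scalar identity $\sum_j c_{p,j}(n,i,1)\,c_{k,q}(n-1,j,n-2)=\sum_j c_{j,q}(n,i,n-1)\,c_{p,k}(n-1,j,1)$; equality of the two expansions of $f^n_i$ inside $kQ$ does not by itself yield this unless the products $f^1_pf^{n-2}_kf^1_q$ are suitably independent, and it is precisely the filtration construction of the $f^n_i$ in \cite{AAR} that guarantees the coefficientwise identity. Second, the hypothesis concerns $\Lambda_0$ as a \emph{right} $\Lambda$-module, so the functor that directly recovers $\mathbb{L}$ is $\Lambda_0\ot_\Lambda -$ (which kills the left-hand terms $f^1_p\varepsilon^{n-1}_j$), not $-\ot_\Lambda\Lambda_0$ as you first apply; only that one side is needed for the lifting lemma, so the ``symmetric computation'' you defer to is the one you should have led with. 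Neither point is a fatal gap, but both are where the substance of the original proof lives.
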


We note that for each $n$ and $i$, $\{\varepsilon^n_i\}_{i=0}^{t_n}$ is a basis of $\K_n$ as a $\Lambda^e$-module. The scalars $c_{p,j}(n,i,r)$ are those appearing in~\eqref{comult-struc} and $f^1_{*} := \overline{f^1_{*}}$ is the residue class of $f^1_{*}$ in  $\bigoplus_{i=0}^{t_1} f^1_iR / \bigoplus_{i=0}^{t_n} f^1_i I$. Using the comultiplicative structure of Equation~\eqref{comult-struc}, a cup product formula on Hochschild cohomology of Koszul quiver algebra was presented in~\cite{MSKA} using the resolution $\K$.

We recall the definition of the reduced bar resolution of algebras defined by quivers and relations. If $\Lambda_0$ is isomorphic to $m$ copies of $k$, take $\{e_1,e_2,\ldots,e_m\}$ to be a complete set of primitive orthogonal central idempotents of $\Lambda$. In this case $\Lambda$ is not necessarily an algebra over $\Lambda_0$. If $\Lambda_0$ is isomorphic to $k$, then $\Lambda$ is an algebra over $\Lambda_0$. For convenience, we use the same notation $\mathbb{B}$ for both the bar resolution and the reduced bar resolution. The reduced bar resolution $(\mathbb{B}, \delta)$, where  $\mathbb{B}_n := \Lambda^{\ot_{\Lambda_0}(n+2)}$ is the $(n+2)$-fold tensor product of $\Lambda$ over $\Lambda_0$ and uses the same differential as the usual bar resolution presented in Equation~\eqref{bar-diff}. The resolution $\K$ can be embedded naturally into the reduced bar resolution $\mathbb{B}$. There is a map $\iota:\K\rightarrow\mathbb{B}$ defined by $\iota(\varepsilon^n_r) = 1\ot \widetilde{f^n_r}\ot 1$ such that $\delta\iota=\iota d$, where 
\begin{equation}\label{the-f}
\widetilde{f^n_j} = \sum c_{j_1j_2\cdots j_n} f^1_{j_1}\ot f^1_{j_2}\ot \cdots\ot f^1_{j_n}\quad\text{ if }\quad f^n_j = \sum c_{j_1j_2\cdots j_n} f^1_{j_1} f^1_{j_2} \cdots f^1_{j_n}
\end{equation}
for some scalar $c_{j_1j_2\cdots j_n}$. It was shown in \cite[Proposition 2.1]{MSKA} that $\iota$ is indeed an embedding. By taking $\Delta_{\mathbb{B}}:\mathbb{B}\rightarrow\mathbb{B}\ot_{\Lambda}\mathbb{B}$ to be the following comultiplicative map (or diagonal map) on the bar resolution,
\begin{equation}\label{diag-bar}
\Delta_{\mathbb{B}}(a_0\otimes\cdots\otimes a_{n+1}) = \sum_{i=0}^n (a_0\otimes\cdots\otimes a_i\ot 1)\ot_{\Lambda}( 1\ot a_{i+1}\otimes\cdots\otimes a_{n+1}),
\end{equation}
it was also shown in~\cite[Proposition 2.2]{MSKA} that the diagonal map $\Delta_{\K}:\K\rightarrow\K\ot_{\Lambda}\K$ on the complex $\K$ has the following form.
\begin{equation}\label{diag-k}
\Delta_{\K}(\varepsilon^n_r) = \sum_{v=0}^{n}\sum_{p=0}^{t_v}\sum_{q=0}^{t_{n-v}}c_{p,q}(n,r,v)\varepsilon^v_p\ot_{\Lambda}\varepsilon^{n-v}_q.
\end{equation}
%%%%%%%%%%%%%%%%%%%%
The compatibility of $\Delta_{\K}, \Delta_{\mathbb{B}}$ and $\iota$ means that $(\iota\ot \iota)\Delta_{\K} = \Delta_{\mathbb{B}} \iota$ where $(\iota\ot \iota)(\K\ot_{\Lambda}\K) = \iota(\K)\ot_{\Lambda}\iota(\K)\subseteq \mathbb{B}\ot_{\Lambda}\mathbb{B}.$

\textbf{Deformation of algebras using reduction system:}
The theory of deformation of algebras has a much more wider scope than the contents of this article. There has been survey articles covering several aspects of algebraic deformation theory - from deformations arising from noncommutative geometry to formal, infinitesimal and graded deformations. References were made to such articles in \cite[chapter 5]{HCA}. Although many results are known for deformation of commutative algebras, little is known of deformation of quiver and path algebras. Let $\Lambda=kQ/I$ and $Q$ a finite quiver. There is associated to $\Lambda$, a reduction system $R=\{(s,\varphi_s)\}$ given by Definition \ref{red-sys} and the Gerstenhaber bracket equips Hochschild cohomology $\HH^*(\Lambda)$ with a DG Lie algebra structure which controls the theory of deformation of $R$. Using a combinatorial star product, it was shown in \cite{SZ} that the deformations of $\Lambda$ is equivalent to the deformations of the reduction system $R$ which is also equivalent to the deformation of the relations in $I$. There is a map $\varphi:kQ\rightarrow k Irr_S$ defined by $\varphi(s)=\varphi_s$ sending a path in the quiver algebra to an irreducible path. Let $\pi:kQ\rightarrow \Lambda$ be the projection map. The combinatorial star product on $\Lambda$ defined on irreducible paths $u,v\in kQ$ is the image of $\pi(u)\star\pi(v)$ in $\Lambda$ after performing right-most \textit{reductions} on the path $uv\in kQ$.

Suppose that $(\Lambda_\tau, \mu_\tau)$ is a formal deformation (Definition \ref{defi-formal-deformation}) of the associative multiplication on $\Lambda$, one way to describe the deformed multiplication $\mu_\tau$ is by obtaining a necessary and sufficient condition for the associativity of the combinatorial star product. This condition is given by Equation \eqref{comb-star}. There is a projective bimodule resolution $\mathfrak{p}(Q,R)$ arising from a reduction system $R$ and the combinatorial star product can be used to describe Maurer-Cartan elements in $\mathfrak{p}(Q,R)\otimes \mathfrak{m},$ where $(N,\mathfrak{m})$ is a complete local Noetherian $k$-algebra. In Section \ref{deform-g}, we use the combinatorial star product to determine Maurer-Cartan elements of $\HH^2(A_1)$, thus determining a family of deformations of the algebra $A_1$. In the future, it will be interesting to find meaningful ways to describe Maurer-Cartan elements that were obtained using the star product (as in Example \ref{final-example}) in terms of those obtained by the homotopy lifting technique (as in Example \ref{mc-elements}) and vice versa. We now give a result from \cite{Berg} on reduction systems.
\begin{definition}\label{red-sys}
Let $\Lambda = kQ/I$ be a path algebra with finite quiver $Q$. A reduction system $R$ for $kQ$ is a set of pairs
$$R = \{ (s,\varphi_s) \;|\; s\in S, \varphi_s\in kQ\}$$
where 
\begin{itemize}
\item $S$ is a subset of paths of length greater than or equal to 2 such that $s$ is not a subpath of $s'$ when $s\neq s'\in S$
\item $s$ and $\varphi_s$ are parallel paths
\item for each $(s,\varphi_s)\in R$, $\varphi_s$ is irreducible or a linear combination of irreducible paths.
\end{itemize}
\end{definition}

We say a path is \textit{irreducible} if it does not contain elements in $S$ as a subpath.
\begin{definition}\label{diamond-defi}
Given a two-sided ideal $I$ of $kQ$, we say that a reduction system $R$ satisfies the diamond condition $(\diamond)$ for $I$ if
\begin{itemize}
\item $I$ is equal to the two-sided ideal generated by the set $\{s-\varphi_s\}_{(s,\varphi_s)\in R}$ and
\item every path is \textit{reduction unique}.
\end{itemize}
We call a reduction system $R$ finite if $R$ is a finite set.
\end{definition}
\begin{definition}
Let $R$ be a reduction system for $kQ$ and $p,q,r$ be paths of length at least 1. A path $pqr$ of length at least 3 is an \textit{overlap ambiguity} of $R$ if $pq, qr\in S$. The set of all paths having one overlap is also the set of all 1-ambiguity and is denoted $S_3$.
\end{definition}
 We now state the diamond lemma as provided in \cite{Berg}.
\begin{theorem}\cite[Thm 1.2]{Berg}
Let $R=\{(s,\varphi_s)\}$ be a reduction system for $kQ$ and let $S=\{s \;|\;(s,\varphi_s)\in R\}$. Denote by $I=\langle s-\varphi_s\rangle_{s\in S}$ the corresponding two-sided ideal of $\Lambda=kQ/I$. If $R$ is reduction finite, the following are equivalent:
\begin{itemize}
\item all overlap ambiguities of $R$ are resolvable
\item $R$ is reduction unique, that is $R$ satisfies $(\diamond)$ for $I$
\item the image of the set of irreducible paths under the projection $\pi:kQ\rightarrow \Lambda$ forms a $k$-basis for $\Lambda$.
\end{itemize}
\end{theorem}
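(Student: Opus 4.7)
The plan is to establish the three equivalences by proving the cycle $(iii) \Rightarrow (ii) \Rightarrow (i) \Rightarrow (iii)$. The hypothesis that $R$ is reduction finite means every sequence of reductions terminates, which equips $kQ$ with a well-founded partial order in which each reduction step strictly decreases a path; this is what enables Noetherian induction throughout the argument.

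The implication $(iii) \Rightarrow (ii)$ is direct: given any path $w$, any maximal chain of reductions terminates in a $k$-linear combination of irreducible paths, and any two such results differ by an element of the ideal $I$; linear independence of the images of $\mathrm{Irr}_S$ in $\Lambda$ forces them to agree, so reduction is unique. The implication $(ii) \Rightarrow (i)$ is equally immediate: an overlap ambiguity $pqr$ with $pq, qr \in S$ admits the two one-step reductions $\varphi_{pq} \cdot r$ and $p \cdot \varphi_{qr}$, and reduction uniqueness forces these to share a common irreducible reduct, which is precisely resolvability.

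The nontrivial direction is $(i) \Rightarrow (iii)$, and I would carry it out via Newman's lemma. First I would establish \emph{local confluence}: if a path $w$ admits two one-step reductions at subpaths $s_1, s_2 \in S$, the cases are that $s_1$ and $s_2$ occur in disjoint positions, in which case the two reductions commute and immediately yield a common successor, or that they overlap properly, which is precisely an overlap ambiguity, resolvable by hypothesis $(i)$. Note that one subpath being strictly contained in the other is ruled out by the clause in Definition~\ref{red-sys} requiring that no $s \in S$ be a subpath of another. Combined with well-foundedness, Newman's lemma then upgrades local confluence to global confluence, giving each element of $kQ$ a unique irreducible reduct. This defines a $k$-linear retraction $r: kQ \to k\cdot \mathrm{Irr}_S$ that annihilates every generator $s - \varphi_s$ of $I$, so $I \subseteq \ker r$, and the reverse containment is trivial. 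Passing to the quotient identifies $\Lambda$ with $k \cdot \mathrm{Irr}_S$, so the images of irreducible paths form a $k$-basis.

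The main obstacle is the local confluence step in $(i) \Rightarrow (iii)$, together with the passage from reducing single paths to reducing arbitrary $k$-linear combinations: a reduction applied inside $\varphi_s$ rather than at the head $s$ can create fresh redexes, so the Noetherian induction must be organized so that the chosen well-order strictly drops at each step and so that reductions extend unambiguously to sums. Newman's lemma is the clean conceptual packaging, but verifying its hypotheses rigorously in the setting of linear combinations of paths is where the real work lies.
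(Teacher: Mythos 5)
This theorem is quoted from Bergman (\cite{Berg}) and the paper supplies no proof of its own, so there is nothing internal to compare against; judged on its merits, your argument is correct and is essentially the classical proof of the Diamond Lemma: the cycle $(iii)\Rightarrow(ii)\Rightarrow(i)$ by the cheap observations you give, and $(i)\Rightarrow(iii)$ by local confluence plus termination (Newman's lemma / Noetherian induction), with inclusion ambiguities correctly excluded by the subpath condition in Definition~\ref{red-sys}. The one place you flag as ``the real work'' --- extending unique normal forms from monomials to linear combinations and checking that the retraction kills the whole two-sided ideal, not just its generators --- is indeed exactly where Bergman's original argument spends its care, so your sketch is faithful to the standard proof and has no wrong steps, only that honestly acknowledged gap.
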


It is known that for any two-sided ideal $I$ of the quiver algebra $kQ/I$, there is always a choice of a reduction system $R$ satisfying the diamond condition $(\diamond)$ of Definition \ref{diamond-defi}. This is a result of S. Chouhy and A. Solotar \cite[Prop 2.7, Thm 4.1, Thm 4.2]{CS}. Furthermore, there is a projective bimodule resolution $\mathfrak{p}(Q,R)$ associated to the algebra $\Lambda=kQ/I$ useful in extracting information about Hochschild cohomology.

Let $B$ be a $k$-algebra and let $\tau$ be an indeterminate. The ring $B[[\tau]]$ is the ring of formal power series in $\tau$ with coefficients in $B$. The ring $B[[\tau]]$ is a $k[[\tau]]$-module if we identify $k$ with the subalgebra $k\cdot 1$ of $B$. The multiplication in $B$ is usually denoted by concatenation while the multiplication in $B[[\tau]]$ is given as 
$$(\sum_{i\geq 0} a_i\tau^i)(\sum_{j\geq 0} b_j\tau^j) = \sum_{m\geq 0}(\sum_{i+j=m} a_ib_j\tau^m)$$
We are interested in a new associative structure on $B$. All such associative structure provides a deformation $B_\tau$ of the algebra $B$.

\begin{definition}\label{defi-formal-deformation}
A \textit{formal} deformation $(B_\tau,\mu_\tau)$ of $B$ (also called a deformation of $A$ over $k[[\tau]]$) is an associative bilinear multiplication $\mu_\tau:B[[\tau]]\otimes B[[\tau]]\rightarrow B[[\tau]]$ such that in the quotient by the ideal $(\tau)$, the multiplication $\mu_\tau(b_1,b_2)$ coincides with the multiplication in $B$ for all $b_1,b_2\in B[[\tau]]$.
\end{definition}
The multiplication $\mu_\tau$ above is determined by products of pairs of elements of $B$, so that for every $a,b\in B$
\begin{equation}
\mu_\tau(a,b) = ab +\mu_1(a,b)\tau + \mu_2(a,b)\tau^2 + \mu_3(a,b)\tau^3+\cdots
\end{equation}
where $ab$ is the usual multiplication in $B$ and $\mu_i:B\otimes_k B\rightarrow B$ are bilinear maps. If we denote the usual multiplication in $B$ by $\mu$, we may denote a deformation of $(B,\mu)$ by $(B_\tau,\mu_\tau)$ where 
\begin{equation}\label{mu}
\mu_\tau = \mu + \mu_1\tau +\mu_2\tau^2 +\mu_3\tau^3 + \cdots
\end{equation}
\begin{remark}\
The \textit{first order term} i.e. the bilinear map $\mu_1$ is called an \textit{infinitesimal deformation} if it is a Hochschild 2-cocycle. Furthermore, if $\mu_1$ is an infinitesimal deformation, it defines a deformation of $B$ over $k[\tau]/(\tau^2)$ and satisfies
$$\mu_1(ab,c)+\mu_1(a,b)c = \mu_1(a,bc)+a\mu_1(b,c).$$ 
This equation is derived from the associative property that the bilinear map $\mu_\tau$.
\end{remark}
While formal deformations are deformations over $k[[\tau]]$, algebraic deformations are deformations over $k[\tau]$. The idea of making a formal deformation into an algebraic deformation is called the \textit{algebraization of formal deformations} and were examined in details with respect to reduction systems by S. Barmeier and Z. Wang in \cite{SZ}. One of their main results is the following theorem.
\begin{theorem}\cite[Thm 7.1]{SZ}\label{equiv-deform}
Given any finite quiver $Q$ and any two-sided ideal of relations $I$, let $\Lambda=kQ/I$ be the quotient algebra and let $R$ be any reduction system satisfying the diamond condition $(\diamond)$ for $I$. There is an equivalence of formal deformation problems between
\begin{enumerate}
\item[(i)] deformations of the associative algebra structure on $\Lambda$
\item[(ii)] deformations of the reduction system $R$
\item[(iii)] deformations of the relations $I$
\end{enumerate}
\end{theorem}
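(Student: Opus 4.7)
The plan is to establish the equivalence by constructing explicit natural transformations between the three deformation functors on the category of Artinian local $k$-algebras $(N,\mathfrak{m})$ and verifying that they are mutually inverse modulo gauge equivalence. The central tool is the combinatorial star product $(\star)$ introduced earlier, together with the projective bimodule resolution $\mathfrak{p}(Q,R)$, which controls the Maurer-Cartan formalism on each side.

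First, I would describe each deformation functor explicitly. A deformation of the algebra structure on $\Lambda$ is an associative multiplication $\mu_\tau = \mu + \mu_1\tau + \mu_2\tau^2 + \cdots$ as in Equation \eqref{mu}, equivalently a Maurer-Cartan element of the DG Lie algebra $\HHom_k(\Lambda^{\otimes *},\Lambda)$ controlling Hochschild cohomology. A deformation of the reduction system $R$ replaces each $\varphi_s$ by a formal series $\varphi_s^\tau = \varphi_s + \sum_{i\geq 1}\varphi_s^{(i)}\tau^i$ with $\varphi_s^{(i)}\in k\cdot \mathrm{Irr}_S$, such that the deformed system $R_\tau$ still satisfies the diamond condition $(\diamond)$ when tensored with $N$. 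A deformation of the ideal $I$ is a lift $I_\tau \subset kQ[[\tau]]$ of $I$ such that the quotient $kQ[[\tau]]/I_\tau$ is flat over $k[[\tau]]$ with residue algebra $\Lambda$.

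Second, I would construct the three natural transformations. For the passage from (iii) to (ii): since $I$ is generated by $\{s-\varphi_s\}$, a flat deformation $I_\tau$ lifts each generator to $s - \varphi_s^\tau$, and flatness over $k[[\tau]]$ translates precisely into reduction-uniqueness of the deformed $R_\tau$, giving $(\diamond)$. For the passage from (ii) to (i): using the map $\varphi:kQ\rightarrow k\cdot \mathrm{Irr}_S$ built from $R_\tau$, define $\mu_\tau(\pi(u),\pi(v)) := \pi(\varphi(uv))$ on irreducible paths and extend by bilinearity; this is exactly the combinatorial star product $\pi(u)\star\pi(v)$ computed with deformed reductions. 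For (i) to (iii): pull back $\mu_\tau$ through $\pi:kQ\rightarrow\Lambda$ and take the kernel of the resulting map onto $\Lambda_\tau$.

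Third, I would verify that these three transformations compose to the identity at the level of equivalence classes, where equivalence on side (i) is conjugation by an $N$-algebra automorphism of $\Lambda\otimes N$ reducing to the identity modulo $\mathfrak{m}$, on side (ii) is the natural gauge action changing the choice of irreducible representatives, and on side (iii) is change of presentation of $I_\tau$. The main obstacle is showing that associativity of $\star$ is equivalent to resolvability of all deformed overlap ambiguities in $S_3$: the associator $\mu_\tau(\mu_\tau(u,v),w) - \mu_\tau(u,\mu_\tau(v,w))$ evaluated on a triple whose product is an overlap path reduces, order by order in $\tau$, to the difference between the two resolutions of the ambiguity. This identification is where the diamond lemma of Bergman is used in the deformed setting, and it is precisely the condition making each of (i), (ii), (iii) a genuine deformation. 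Tracking this correspondence order by order yields the equivalence of the three deformation problems.
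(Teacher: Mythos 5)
This statement is quoted verbatim from Barmeier--Wang \cite[Thm 7.1]{SZ}; the paper you were given does not prove it, but only cites it and later uses its consequences (the combinatorial star product and the Maurer--Cartan criterion of Equation \eqref{comb-star}). So there is no in-paper proof to compare against, and your attempt has to be judged against the actual argument in \cite{SZ}.

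Your outline captures the right global shape --- three deformation functors, transformations built from the star product and the diamond lemma, gauge equivalence on each side --- and this is broadly consistent with how \cite{SZ} proceeds. But as written it is a plan rather than a proof, and the places where you wave your hands are exactly where the content of the theorem lives. First, your passage from (ii) to (i) defines $\mu_\tau(\pi(u),\pi(v)) := \pi(\varphi(uv))$ by a single application of $\varphi$; the combinatorial star product requires performing \emph{iterated} (right-most) reductions until the result is irreducible, and one must prove that this process terminates in the deformed setting and is independent of the reduction order --- a deformed diamond lemma that you assert but do not establish. Second, the claim that flatness of $kQ[[\tau]]/I_\tau$ over $k[[\tau]]$ ``translates precisely into reduction-uniqueness of the deformed $R_\tau$'' is the hardest direction of the (ii)$\leftrightarrow$(iii) correspondence and is not automatic; in \cite{SZ} this is handled through the projective resolution $\mathfrak{p}(Q,R)$ and an $L_\infty$ structure obtained by homotopy transfer, with the comparison morphisms $F_\bullet, G_\bullet$ furnishing the quasi-isomorphism that makes the deformation functors equivalent. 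Third, the identification of the associator of $\star$ with the failure of overlap resolvability on $S_3$ --- your ``main obstacle'' --- is precisely the Maurer--Cartan criterion \eqref{comb-star}, and it requires an order-by-order computation with the transferred $L_\infty$ brackets (not just the Gerstenhaber bracket on the bar complex) that your sketch does not supply. In short: the architecture is right, but each of the three arrows and the associativity criterion is stated as an expectation rather than derived, so the proposal does not yet constitute a proof.
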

This equivalence can be made explicit by considering combinatorial star products which produce a deformation of $\Lambda$ from a deformation of the reduction system $R$.
In \cite[Section 5]{SZ}, it was established that there are comparison morphisms $F_{\bullet}, G_{\bullet}$ between the bar resolution $\mathbb{B}$ and the resolution $\mathfrak{p}(Q,R)$,
$$\mathfrak{p}_{\bullet}\xrightarrow{F_{\bullet}}\mathbb{B}_{\bullet} \xrightarrow{G_{\bullet}}  \mathfrak{p}_{\bullet}$$
coming from reduction systems and the combinatorial star product can be defined in terms of these morphisms.

Given a reduction system $R=\{(s,\varphi_s)\}$ for the algebra $B=kQ/I$ determined by $S$ as in Definition \ref{red-sys}, we view $\varphi\in\HHom(kS,k Irr_S)$ with $\varphi(s)=\varphi_s$. There is an isomorphism $B\cong kIrr_S$ so that $\HHom(kS, kIrr_S)\cong \HHom(kS,B)$ and a Hochschild 2-cochain in $\HHom(B\otimes_k B,B)$ may be viewed as an element $\varphi\in \HHom(kS,B)$. Taking $\mathfrak{m}=(\tau)$ as the maximal ideal of $k[[\tau]]$, it was shown that the map $\tilde{\varphi}\in\HHom(kS,B)\otimes\mathfrak{m}$ given by
\begin{equation}\label{varphi}
\widetilde{\varphi} = \widetilde{\varphi}_1t +\widetilde{\varphi}_2t^2 + \widetilde{\varphi}_3t^3+\cdots
\end{equation}
are candidates for deformations of $R$ determined by $\varphi$. Moreover, the deformation of $R$ given by $\varphi+\widetilde{\varphi}$ is a deformation of the algebra $B$ if and only if $\widetilde{\varphi}$ is a Maurer-Cartan element of the $L_\infty$ algebra structure on the resolution $\mathfrak{p}(Q,R)\otimes\mathfrak{m}$. More precisely, if $uvw\in S_3$ are overlaps such that $uv, vw\in S$, then $\widetilde{\varphi}$ satisfies the Maurer-Cartan equation if and only if
\begin{equation}\label{comb-star}
(\pi(u)\star\pi(v))\star\pi(w) = \pi(u)\star(\pi(v)\star\pi(w)).
\end{equation}
We recall that the product $\star$ defined on irreducible paths $u,v$ is the image of $\pi(u)\star\pi(v)$ in $\Lambda$ under the map $\pi:kQ\rightarrow \Lambda$ after performing right most reductions on the path $uv\in kQ$ using the reduction system. Indeed, the combinatorial star product $\star$ defines an associative structure on the algebra $(B_\tau,\mu_\tau)$ and we can also write $ a\star b = ab + \mu_1(a,b)$.

In Sections \ref{finding-mc}, we explicitly find Maurer-Cartan elements of $\HH^2(A_1)$ by first solving for Hochschild 2-cocycles $\mu_1$ of Equation \eqref{mu} and then showing that they satisfy the equation $d^*\mu+\frac{1}{2}[\mu,\mu]=0$ using homotopy lifting maps to define the bracket. The combinatorial star product solves the Maurer-Cartan equation by construction. We check in Section \ref{deform-g} that for the same algebra $A_1$ and a choice reduction system $R$, the maps $\tilde{\varphi}$ of Equation \eqref{varphi} describing the combinatorial star product solves the Maurer-Cartan equation given previously in terms of the homotopy lifting maps in Section \ref{finding-mc}.

\section{Main Results}\label{main-results}
In what follows, we consider a finite quiver $Q$ and a quiver algebra $\Lambda = kQ/I$ that is Koszul i.e. $I$ is an admissible ideal generated by paths of length 2. We assume the quiver $Q$ has arrows labelled $f^1_1, f^1_2,\ldots,f^1_{t_1}$ for some integer $t_1$. We suppose further that for each $n$, there are uniform elements $f^n_1, f^n_2,\ldots,f^n_{t_n}$, for some integer $t_n$ defining a minimal projective resolution $\K$ of $\Lambda$ as given by Theorem \eqref{mainres}. Let $\eta:\K_n\rightarrow\Lambda$ be a Hochschild cocycle such that for some index $i$, $\eta(\varepsilon^n_i)=f^1_w$ (resp. $\eta(\varepsilon^n_i)=f^1_wf^1_{w'}$) with $0\leq w,w'\leq t_1$ and $\eta(\varepsilon^n_j)=0$ for all $i\neq j$. We also write 
$\eta = \begin{pmatrix} 0 & \cdots & 0 & (f^1_w)^{(i)} & 0 & \cdots  & 0\end{pmatrix}$ for this type of cocycle (resp. $\eta = \begin{pmatrix} 0 & \cdots & 0 & (f^1_wf^1_{w'})^{(i)} & 0 & \cdots  & 0\end{pmatrix}$). Let $\Delta_{\K}:\K\rightarrow \K\otimes_{\Lambda}\K$ be the diagonal map. Results from \cite{VOL, HCA} establish that there exist maps $\psi_\eta:\K\rightarrow{}\K[1-n]$ such that
\begin{equation}\label{homo-defi-defi}
d\psi_\eta - (-1)^{1-n}\psi_\eta d = (\eta\ot 1 - 1 \ot \eta)\Delta_{\K} 
\end{equation}
for Koszul algebras. These maps are called \textit{homotopy lifting} maps for $\eta$. How would we define such a map explicitly in terms of the basis elements $\varepsilon^n_r$? Can we give a closed formula or expression of the Gerstenhaber bracket using an explicitly described versions of these maps? These are among the questions we address in this section.

In order to distinguish the index $n$ which is the degree of the cocycle $\eta$, we will index the resolution $\K$ by $m$ so that each $\K_m$ is free and generated by $\{\varepsilon^m_r\}_{r=0}^{t_m}$. For an $n$-cocycle, the map $\psi_\eta:\K_{\bullet}\rightarrow\K_{\bullet}$ associated to $\eta$ shifts is of degree $1-n$ so that for a fixed $m$, $\psi_\eta:\K_m\rightarrow{}\K_{m-n+1}$. Suppose that $\K_{m-n+1}$ is generated by $\{\varepsilon^{m-n+1}_{r'}\}_{r'=0}^{t_{m-n+1}}$, fundamental results from linear algebra means such a map is a $t_{m-n+1}\times t_{m}$ matrix when the modules are considered as left $\Lambda^e$-modules. In particular, for $\eta = \begin{pmatrix} 0 & \cdots & 0 & (f^1_w)^{(i)} & 0 & \cdots  & 0\end{pmatrix},$  such a map is defined on $\varepsilon^m_r$ by
\begin{equation}\label{map-gen}
\psi_\eta(\varepsilon^m_r) = \sum_{j=0}^{t_{m-n+1}} \lambda_{j}(m,r)\varepsilon^{m-n+1}_{j}
\end{equation} 
and for $\eta = \begin{pmatrix} 0 & \cdots & 0 & (f^1_wf^1_{w'})^{(i)} & 0 & \cdots  & 0\end{pmatrix}$ , such a map is defined on $\varepsilon^m_r$ by
\begin{equation}\label{map-gen-1}
\psi_\eta(\varepsilon^m_r) = \sum_{j=0}^{t_{m-n+1}} \lambda_{j}(m,r)f^1_w\varepsilon^{m-n+1}_{j}+ \lambda'_{j}(m,r)\varepsilon^{m-n+1}_{j}f^1_{w'}
\end{equation} 
where $\lambda_{j}(m,r),\lambda'_j(m,r)\in\Lambda^e$ in general. For details about these maps, see \cite{TNO2}. We now restrict Equation \eqref{map-gen} to the special case where for some $j=r'$, $\lambda_{j}(m,r)=b_{m,r}(m-n+1,r')$ is a scalar and $\lambda_{j}(m,r)=0$ for all $j\neq r'$, that is
\begin{equation}\label{map-special}
\psi_\eta(\varepsilon^m_r) = b_{m,r}(m-n+1,r')\varepsilon^{m-n+1}_{r'}
\end{equation}
and restrict Equation \eqref{map-gen-1} to the special case where all $\lambda_j(m,r), \lambda^{'}_j(m,r)$ are zero except for some indices $s$ and $s'$ with $\lambda_j(m,r) = \lambda_{m,r}(m-n+1,s)\neq 0$ and $\lambda^{'}_j(m,r) = \lambda_{m,r}(m-n+1,s')\neq 0$. That is,
\begin{equation}\label{map-special-2}
\psi_\eta(\varepsilon^m_r) = \lambda_{m,r}(m-n+1,s)f^1_w\varepsilon^{m-n+1}_{s}+ \lambda_{m,r}(m-n+1,s')\varepsilon^{m-n+1}_{s'}f^1_{w'}.
\end{equation}
It was shown in \cite{TNO2} that Equation \eqref{homo-defi-defi} holds under certain conditions on the scalars $b_{m,r}(m-n+1,r')$, and therefore the special maps given by Equations \eqref{map-special} are indeed homotopy lifting maps for the associated cocycles. A similar argument holds for the map given by Equation \eqref{map-special-2}.

Our motivation for defining the maps the way they were defined comes from several examples that were computed. We note in particular that the scalars $b_{m,r}(m-n+1,r')$ satisfy some recurrence relations given by Equation \eqref{big-equation}. We observe that if $\psi_\eta(\varepsilon^{m-1}_{\bar{r}}) = b_{m-1,\bar{r}}(m-n,r'')\varepsilon^{m-n}_{r''}$, we can obtain the scalars $b_{m,r}(m-n+1,r')$ in terms of the scalars $b_{m-1,\bar{r}}(m-n,r'')$ and the scalars $c_{pq}(n,i,r)$ coming from the comultiplicative structure on $\K$. The following diagram is not commutative but gives a pictures of this idea:
$$ \begin{tikzcd}
\K: =\arrow{d}{\psi_{\bar{\eta}}} \cdots \arrow{r} 
    & \K_{m+1}\arrow{d}{\psi_{\bar{\eta}}}\arrow{r}
        & \K_m \arrow{d}{\psi_{\bar{\eta}}} \arrow{r}
            & \K_{m-1} \arrow{d}{\psi_{\bar{\eta}}} \arrow{r}
                & \cdots \\
\K[1-n]:= \cdots \arrow{r}
    & \K_{m-n+2} \arrow{r}
        & \K_{m-n+1} \arrow{r}
            & \K_{m-n} \arrow{r}
                & \cdots
\end{tikzcd}$$
$\bullet$ We can obtain $\psi_{\bar{\eta}}\vert_{\K_{m+1}}$ from $\psi_{\bar{\eta}}\vert_{\K_{m}}$ for every $m$ using the scalars  $b_{m,r}(m-n+1,r^*)$ and $c_{pq}(n,i,r)$. From Remark \ref{rema-koszul-homo}, the scalars $b_{n-1,r^*}(0,r^{**})=0$ for all $r^*, r^{**}$ since $\psi_{\bar{\eta}}\vert_{\K_{n-1}}$ is the zero map.

For a finite quiver $Q$, let $\Lambda=kQ/I$ be a quiver algebra that is Koszul and let $\K_m$ be the projective bimodule resolution of $\Lambda$ with basis $\{\varepsilon^m_r\}_{r=0}^{t_m}$ as given by Theorem \ref{mainres}. For the specific modules $\K_{m-n+1}$,$\K_{m-n}$, and $\K_{m-1}$, let the basis elements be
$\{\varepsilon^{m-n+1}_{r'}\}_{r'=0}^{t_{m-n+1}}$, $\{\varepsilon^{m-n}_{r''}\}_{r''=0}^{t_{m-n}}$, and $\{\varepsilon^{m-1}_{\bar{r}}\}_{\bar{r}=0}^{t_{m-1}}$ respectively. For the indices $r,\bar{r},r'',r'$, let the scalars $b_{m,r}(*,**)$ be such that the following recurrence relations hold;
\begin{align}\label{big-equation}
&(i)\; b_{m,r}(m-n+1,r') c_{rr''}(m-n+1,r',1)  \\
\notag &= (-1)^{m-1}b_{m-1,\bar{r}}(m-n,r'') c_{r\bar{r}}(m,r,1) +  c_{ir''}(m,r,n),\\
\notag &b_{m,r}(m-n+1,r') c_{r''r}(m-n+1,r',m-n) \\
\notag &= (-1)^{m-1} b_{m-1,\bar{r}}(m-n,r'') c_{\bar{r}r}(m,r,m-1) -(-1)^{n(m-n)}c_{r''i}(m,r,m-n),\\
\notag &\text{  and for every pair of indices  } (p,q)\neq (r,r'), (p,q)\neq (r'',r), \\
\notag &(ii)\; b_{m,r}(m-n+1,r') c_{pq}(m-n+1,r',*) \\
\notag & = (-1)^{m-1} b_{m-1,\bar{*}}(m-n,*'') c_{pq}(m,r,*).
\end{align}

We start with the following Lemma. 

\begin{lemma}\label{lemma-homotopylifting-1}
Let $Q$ be a finite quiver and $\Lambda=kQ/I$ a quiver algebra that is Koszul. Suppose that $\eta :\K_n\rightarrow \Lambda$ is a cocycle such that  $ \eta = \begin{pmatrix} 0 & \cdots & 0 & (f^1_w)^{(i)} & 0 & \cdots  & 0\end{pmatrix}$, $0\leq w\leq t_1$. If $\psi_\eta:\K\rightarrow \K[1-n]$ is defined as $\psi_\eta(\varepsilon^m_r) = b_{m,r}(m-n+1,r')\varepsilon^{m-n+1}_{r'}$ and the recursion equation \eqref{big-equation} hold, then $d\psi_\eta - (-1)^{1-n}\psi_\eta d = (\eta\ot 1 - 1 \ot \eta)\Delta_{\K} $.
\end{lemma}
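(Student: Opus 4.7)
The plan is to verify the homotopy lifting identity $d\psi_\eta - (-1)^{1-n}\psi_\eta d = (\eta\otimes 1 - 1\otimes \eta)\Delta_{\K}$ directly on each $\Lambda^e$-module generator $\varepsilon^m_r$, by expanding both sides via the explicit formulas for $d$ (Equation \eqref{diff-k} of Theorem \ref{mainres}) and for $\Delta_{\K}$ (Equation \eqref{diag-k}), and then matching coefficients of the basis expressions of the form $f^1_p\varepsilon^{m-n}_{r''}$ and $\varepsilon^{m-n}_{r''}f^1_q$ in the free $\Lambda^e$-module $\K_{m-n}$. Since by Remark \ref{rema-koszul-homo} the second relation of Definition \ref{homolift} is automatic once we set $\psi_\eta|_{\K_{n-1}} = 0$, this is the only check required.

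First, for the right-hand side I would apply $\Delta_{\K}$ to $\varepsilon^m_r$ using Equation \eqref{diag-k}. Because $\eta$ is supported only on $\varepsilon^n_i$ with value $f^1_w$, the only surviving summands in $(\eta\otimes 1)\Delta_{\K}(\varepsilon^m_r)$ come from $v = n$ with first basis index equal to $i$, producing $\sum_{r''} c_{i,r''}(m,r,n)\, f^1_w \varepsilon^{m-n}_{r''}$; similarly $(1\otimes\eta)\Delta_{\K}(\varepsilon^m_r)$ reduces to $(-1)^{n(m-n)}\sum_{r''} c_{r'',i}(m,r,m-n)\, \varepsilon^{m-n}_{r''} f^1_w$, with the Koszul sign $(-1)^{n(m-n)}$ recording the interchange of the degree-$n$ cocycle $\eta$ past a degree-$(m-n)$ element.

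Second, for the left-hand side, $d\psi_\eta(\varepsilon^m_r) = b_{m,r}(m-n+1,r')\, d(\varepsilon^{m-n+1}_{r'})$ is obtained by applying \eqref{diff-k} at homological degree $m-n+1$, while $\psi_\eta d(\varepsilon^m_r)$ is computed by applying \eqref{diff-k} at degree $m$ first and then using $\Lambda^e$-linearity to move $\psi_\eta$ past the left and right multiplications by $f^1_p$ and $f^1_q$, evaluating it on each $\varepsilon^{m-1}_{\bar r}$ as $b_{m-1,\bar r}(m-n,r'')\,\varepsilon^{m-n}_{r''}$. After collecting terms, the coefficient of each $f^1_p\varepsilon^{m-n}_{r''}$ in $d\psi_\eta - (-1)^{1-n}\psi_\eta d$ becomes $b_{m,r}(m-n+1,r')\, c_{p,r''}(m-n+1,r',1) - (-1)^{n-1} c_{p,\bar r}(m,r,1)\, b_{m-1,\bar r}(m-n,r'')$, with an analogous expression for $\varepsilon^{m-n}_{r''} f^1_q$ carrying the extra signs $(-1)^{m-n+1}$ and $(-1)^m$. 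Matching the coefficients of the two distinguished monomials $f^1_w \varepsilon^{m-n}_{r''}$ and $\varepsilon^{m-n}_{r''} f^1_w$ against the right-hand side recovers the two equations of \eqref{big-equation}(i), while requiring the coefficients of every non-distinguished $f^1_p\varepsilon^{m-n}_{r''}$ and $\varepsilon^{m-n}_{r''}f^1_q$ to vanish yields \eqref{big-equation}(ii). Hence, if \eqref{big-equation} holds, the two sides agree term by term on every generator $\varepsilon^m_r$.

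The main obstacle will be careful sign bookkeeping: the identities $(-1)^{1-n} = (-1)^{n-1}$ and the interplay of $(-1)^{m-n+1}$, $(-1)^m$, $(-1)^{m-1}$, and the Koszul sign $(-1)^{n(m-n)}$ must be reconciled so that, after factoring through, the signs on the two sides align exactly as written in \eqref{big-equation}. A secondary subtlety lies in the indexing: for each $\bar r$ appearing in $d(\varepsilon^m_r)$, the map $\psi_\eta$ produces its own distinguished target index $r''$ in $\K_{m-n}$, and the hypotheses \eqref{big-equation}(i)--(ii) are stated precisely so that compatibility holds simultaneously at the two distinguished pairs and uniformly at every other index pair $(p,q)$.
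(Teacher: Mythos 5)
Your proposal is correct and takes essentially the same route as the paper: both expand $d\psi_\eta$, $\psi_\eta d$ and $(\eta\ot 1 - 1\ot\eta)\Delta_{\K}$ on each generator $\varepsilon^m_r$ using Equations \eqref{diff-k} and \eqref{diag-k}, then match coefficients of the monomials $f^1_p\varepsilon^{m-n}_{r''}$ and $\varepsilon^{m-n}_{r''}f^1_q$, the matching conditions being exactly the recurrence relations \eqref{big-equation} (the paper merely works first through a rank-two model with arrows $f^1_r,f^1_s$ before stating the general case). The only point to reconcile is the sign you attach to $\psi_\eta d$: you carry $(-1)^{n-1}$, faithful to the statement's $(-1)^{1-n}$, whereas \eqref{big-equation} and the paper's own expansion use $(-1)^{m-1}$, so your coefficient identities coincide with \eqref{big-equation} as written only after adopting that convention.
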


\begin{proof}
We prove this result in the following way; for the free modules $\K_{m-1},\K_m,\K_{m-n},\K_{m-n+1}$, we define the special case of the map $\psi_\eta$ given by Equation \eqref{map-special}. We then use the left and right hand side of \eqref{homo-defi-defi} to derive the recurrence relations. This is equivalent to saying that under these conditions, equation \eqref{homo-defi-defi} holds provided the recurrence relations of \eqref{big-equation} hold.

Let us suppose we have a quiver $Q$ generated by two arrows $\{f^1_r,f^1_s\}$ and each $\K_n$ is free of rank 2. For each $m$ let $\{\varepsilon^{m-1}_{\bar{r}},\varepsilon^{m-1}_{\bar{s}}\}$, $\{\varepsilon^m_r,\varepsilon^m_s\}$, $\{\varepsilon^{m-n+1}_{r'},\varepsilon^{m-n+1}_{s'}\}$, and $\{\varepsilon^{m-n}_{r''},\varepsilon^{m-n}_{s''}\}$ be a basis for $\K_{m-1}$, $\K_m$, $\K_{m-n+1}$, and $\K_{m-n}$ respectively. A possible example of this scenario is given in Example \eqref{example2}.
 The differential given by \eqref{diff-k} on $\varepsilon^m_r$ for this special case for instance, is given by
\begin{align*}
& d(\varepsilon^m_r) = c_{r\bar{r}}(m,r,1)f^1_r\varepsilon^{m-1}_{\bar{r}} + c_{\bar{r}r}(m,r,m-1)\varepsilon^{m-1}_{\bar{r}} f^1_r \\
&+ c_{r\bar{s}}(m,r,1)f^1_r\varepsilon^{m-1}_{\bar{s}} + c_{\bar{r}s}(m,r,m-1)\varepsilon^{m-1}_{\bar{r}} f^1_s + c_{s\bar{r}}(m,r,1)f^1_s\varepsilon^{m-1}_{\bar{r}} \\
&+ c_{\bar{s}r}(m,r,m-1)\varepsilon^{m-1}_{\bar{s}} f^1_r + c_{s\bar{s}}(m,r,1)f^1_s\varepsilon^{m-1}_{\bar{s}} + c_{\bar{s}s}(m,r,m-1)\varepsilon^{m-1}_{\bar{s}} f^1_s
\end{align*}
and a similar formula can be written for $d(\varepsilon^m_s)$.\\
Let us recall that $ \eta = \begin{pmatrix} 0 & \cdots & 0 & (f^1_w)^{(i)} & 0 & \cdots  & 0\end{pmatrix}$ means that $\eta(\varepsilon^n_i)=f^1_w$ with $w=r$ or $w=s$ and $\eta(\varepsilon^n_j)=0$ for all $j\neq i$. From the hypothesis, we define $\psi_\eta:\K_m\rightarrow \K_{m-n+1}$ by $\psi_\eta (\varepsilon^m_r) = b_{m,r}(m-n+1,r')\varepsilon^{m-n+1}_{r'},$ and $\psi_\eta (\varepsilon^m_s) = b_{m,s}(m-n+1,s')\varepsilon^{m-n+1}_{s'},$ and $\psi_\eta:\K_{m-1}\rightarrow \K_{m-n}$ is defined by $\psi_\eta (\varepsilon^{m-1}_{\bar{r}}) = b_{m-1,\bar{r}}(m-n,r'')\varepsilon^{m-n}_{r''},$ and $\psi_\eta (\varepsilon^{m-1}_{\bar{s}}) = b_{m-1,\bar{s}}(m-n,s'')\varepsilon^{m-n}_{s''}$. 

Using Equation\eqref{homo-defi-defi}, the expression $(d\psi_\eta - (-1)^{m-1}\psi_\eta d)(\varepsilon^m_r)$ becomes $d(b_{m,r}(m-n+1,r')\varepsilon^{m-n+1}_{r'}) - (-1)^{m-1}\psi_\eta d(\varepsilon^m_r)$ and is equal to
\begin{align*}
&b_{m,r}(m-n+1,r')\Big( c_{rr''}(m-n+1,r',1)f^1_r\varepsilon^{m-n}_{r''} \\
&+ c_{r''r}(m-n+1,r',m-n)\varepsilon^{m-n}_{r''} f^1_r + c_{rs''}(m-n+1,r',1)f^1_r\varepsilon^{m-n}_{s''} \\
&+ c_{r''s}(m-n+1,r',m-n)\varepsilon^{m-n}_{r''} f^1_s + c_{sr''}(m-n+1,r',1)f^1_s\varepsilon^{m-n}_{r''} \\
& + c_{s''r}(m-n+1,r',m-n)\varepsilon^{m-n}_{s''} f^1_r + c_{ss''}(m-n+1,r',1)f^1_s\varepsilon^{m-n}_{s''} \\
&+ c_{s''s}(m-n+1,r',m-n)\varepsilon^{m-n}_{s''} f^1_s\Big) \\
&-(-1)^{m-1}\Big( b_{m-1,\bar{r}}(m-n,r'') c_{r\bar{r}}(m,r,1)f^1_r\varepsilon^{m-n}_{r''} \\
&+ b_{m-1,\bar{r}}(m-n,r'') c_{\bar{r}r}(m,r,m-1)\varepsilon^{m-n}_{r''} f^1_r \\
&+b_{m-1,\bar{s}}(m-n,s'') c_{r\bar{s}}(m,r,1)f^1_r\varepsilon^{m-n}_{s''} \\
&+ b_{m-1,\bar{r}}(m-n,r'') c_{\bar{r}s}(m,r,m-1)\varepsilon^{m-n}_{r''} f^1_s \\
&+ b_{m-1,\bar{r}}(m-n,r'') c_{s\bar{r}}(m,r,1)f^1_s \varepsilon^{m-n}_{r''} \\
& +b_{m-1,\bar{s}}(m-n,s'') c_{\bar{s}r}(m,r,m-1)\varepsilon^{m-n}_{s''}f^1_r\\
&+  b_{m-1,\bar{s}}(m-n,s'') c_{s\bar{s}}(m,r,1)f^1_s\varepsilon^{m-n}_{s''} \\
&+  b_{m-1,\bar{s}}(m-n,s'') c_{\bar{s}s}(m,r,m-1)\varepsilon^{m-n}_{s''}f^1_s \Big).
\end{align*}
On the other hand,  the diagonal map is given by\\
 $\displaystyle{ \Delta_{\K}(\varepsilon^m_r) =  \sum_{x+y=r}\sum_{u+v=m} c_{x,y}(m,r,u)\varepsilon^u_x\ot_{\Lambda}\varepsilon^{v}_y}.$ We obtain a non-zero in the expansion of $(\eta\ot 1 -1\ot\eta)\Delta_{\K}(\varepsilon^m_r)$ whenever $x=i$ and $y=i$. This means that
\begin{align*}
&(\eta\ot 1 -1\ot\eta) \sum_{x+y=r}\sum_{u+v=m} c_{x,y}(m,r,u)\varepsilon^u_x\ot_{\Lambda}\varepsilon^{v}_y \\
&= (\eta\ot 1)(c_{i,y}(m,r,n)\varepsilon^n_{i}\ot_{\Lambda}\varepsilon^{m-n}_{y}) - (1\ot\eta)(c_{x,i}(m,r,m-n)\varepsilon^{m-n}_{x}\ot_{\Lambda}\varepsilon^{n}_i)\\
&= c_{i,y}(m,r,n)f^1_w\varepsilon^{m-n}_{y} - (-1)^{n(m-n)} c_{xi}(m,r,m-n)\varepsilon^{m-n}_{x}f^{1}_w,
\end{align*}
for $\{x,y\}=\{r'',s''\}$ with $i+y=r, x+i=r$ and some arrow $f^1_w$. After collecting common terms, the expression $(d\psi_\eta - (-1)^{m-1}\psi_\eta d)(\varepsilon^m_r)$ which is the left hand side of Equation \eqref{homo-defi-defi} becomes 
\begin{align*}
&\Big(b_{m,r}(m-n+1,r') c_{rr''}(m-n+1,r',1) \\
&-(-1)^{m-1}b_{m-1,\bar{r}}(m-n,r'') c_{r\bar{r}}(m,r,1) \Big) f^1_r\varepsilon^{m-n}_{r''} \\
&+\Big( b_{m,r}(m-n+1,r') c_{r''r}(m-n+1,r',m-n)\\
& -(-1)^{m-1} b_{m-1,\bar{r}}(m-n,r'') c_{\bar{r}r}(m,r,m-1) \Big) \varepsilon^{m-n}_{r''} f^1_r \\
&+\Big( b_{m,r}(m-n+1,r') c_{rs''}(m-n+1,r',1) \\
&-(-1)^{m-1} b_{m-1,\bar{s}}(m-n,s'') c_{r\bar{s}}(m,r,1) \Big) f^1_r\varepsilon^{m-n}_{s''} \\
&+\Big( b_{m,r}(m-n+1,r') c_{r''s}(m-n+1,r',m-n)\\
& -(-1)^{m-1}  b_{m-1,\bar{r}}(m-n,r'') c_{\bar{r}s}(m,r,m-1) \Big) \varepsilon^{m-n}_{r''} f^1_s \\
&+\Big( b_{m,r}(m-n+1,r') c_{sr''}(m-n+1,r',1)\\
& -(-1)^{m-1} b_{m-1,\bar{r}}(m-n,r'') c_{s\bar{r}}(m,r,1) \Big)f^1_s\varepsilon^{m-n}_{r''} \\
&+\Big( b_{m,r}(m-n+1,r') c_{s''r}(m-n+1,r',m-n)\\
&-(-1)^{m-1} b_{m-1,\bar{s}}(m-n,s'') c_{\bar{s}r}(m,r,m-1) \Big)\varepsilon^{m-n}_{s''} f^1_r \\
&+\Big( b_{m,r}(m-n+1,r') c_{ss''}(m-n+1,r',1)\\
& -(-1)^{m-1} b_{m-1,\bar{s}}(m-n,s'') c_{s\bar{s}}(m,r,1) \Big) f^1_s\varepsilon^{m-n}_{s''} \\
&+\Big( b_{m,r}(m-n+1,r') c_{s''s}(m-n+1,r',m-n) \\
&-(-1)^{m-1}b_{m-1,\bar{s}}(m-n,s'') c_{\bar{s}s}(m,r,m-1) \Big)\varepsilon^{m-n}_{s''} f^1_s.
\end{align*}

The expression $(\eta\ot 1 -1\ot\eta)\Delta_{\K}(\varepsilon^m_r)$ which is the right hand side of Equation \eqref{homo-defi-defi} still remains $\displaystyle{ c_{i,y}(m,r,n)f^1_w\varepsilon^{m-n}_{y} - (-1)^{n(m-n)} c_{xi}(m,r,m-n)\varepsilon^{m-n}_{x}f^{1}_w}$.
We observe the following about indices $w,x,y$. The index $w$ is either $r$ or $s$, the index $y$ is either $r''$ or $s''$ and the index $x$ is either $r''$ or $s''$. We notice that the additional constraint that $i+y=r$ and $i+x=r$ implies that whenever $y=r''$ we must have $x=r''$ and whenever $y=s''$, we must have $x=s''$. We therefore have the following four cases:\\
\textbf{Case I:} Whenever $w=r$, $y=x=r''$, $i+r''=r$, we have the following set of \textit{recurrence} relations on the scalars $b_{m,r}(m-n+1,r'),$
\begin{align*}
& b_{m,r}(m-n+1,r') c_{rr''}(m-n+1,r',1)\\
& -(-1)^{m-1}b_{m-1,\bar{r}}(m-n,r'') c_{r\bar{r}}(m,r,1) =  c_{ir''}(m,r,n) \\
& b_{m,r}(m-n+1,r') c_{r''r}(m-n+1,r',m-n)\\
& -(-1)^{m-1} b_{m-1,\bar{r}}(m-n,r'') c_{\bar{r}r}(m,r,m-1)\\
&\qquad\qquad =   -(-1)^{n(m-n)}c_{r''i}(m,r,m-n)  \\
&b_{m,r}(m-n+1,r') c_{rs''}(m-n+1,r',1)\\
& -(-1)^{m-1} b_{m-1,\bar{s}}(m-n,s'') c_{r\bar{s}}(m,r,1)  = 0
\end{align*}
\begin{align*}
&b_{m,r}(m-n+1,r') c_{r''s}(m-n+1,r',m-n) \\
&-(-1)^{m-1}  b_{m-1,\bar{r}}(m-n,r'') c_{\bar{r}s}(m,r,m-1) = 0 \\
& b_{m,r}(m-n+1,r') c_{sr''}(m-n+1,r',1)\\
& -(-1)^{m-1} b_{m-1,\bar{r}}(m-n,r'') c_{s\bar{r}}(m,r,1) =0 \\
& b_{m,r}(m-n+1,r') c_{s''r}(m-n+1,r',m-n)\\
&-(-1)^{m-1} b_{m-1,\bar{s}}(m-n,s'') c_{\bar{s}r}(m,r,m-1) = 0 \\
& b_{m,r}(m-n+1,r') c_{ss''}(m-n+1,r',1) \\
&-(-1)^{m-1} b_{m-1,\bar{s}}(m-n,s'') c_{s\bar{s}}(m,r,1) = 0\\
& b_{m,r}(m-n+1,r') c_{s''s}(m-n+1,r',m-n) \\
&-(-1)^{m-1}b_{m-1,\bar{s}}(m-n,s'') c_{\bar{s}s}(m,r,m-1) = 0.
\end{align*}
We note that all the equations of Case (I) above can be expressed more succinctly to mean that whenever $w=r$, $i+r''=r$ and for all $s\neq r$
\begin{multline*}
 b_{m,r}(m-n+1,r') c_{rr''}(m-n+1,r',1) \\
 = (-1)^{m-1}b_{m-1,\bar{r}}(m-n,r'') c_{r\bar{r}}(m,r,1) +  c_{ir''}(m,r,n),\\
 b_{m,r}(m-n+1,r') c_{r''r}(m-n+1,r',m-n) \\
= (-1)^{m-1} b_{m-1,\bar{r}}(m-n,r'') c_{\bar{r}r}(m,r,m-1)\\
 -(-1)^{n(m-n)}c_{r''i}(m,r,m-n),
\end{multline*}
and for every indices such that $(p,q)\neq (r,r'')$ and $(p,q)\neq (r'',r)$,
$b_{m,r}(m-n+1,r') c_{pq}(m-n+1,r',*)  = (-1)^{m-1} b_{m-1,\bar{*}}(m-n,*'') c_{pq}(m,r,*).$\\
\textbf{Case II:} Whenever $w=s$, $y=x=r''$, $i+r''=r$, we have the following set of \textit{recurrence} relations on the scalars $b_{m,r}(m-n+1,r'),$
\begin{multline*}
 b_{m,r}(m-n+1,r') c_{sr''}(m-n+1,r',1) \\
 = (-1)^{m-1}b_{m-1,\bar{r}}(m-n,r'') c_{r\bar{r}}(m,r,1) +  c_{ir''}(m,r,n),\\
 b_{m,r}(m-n+1,r') c_{r''s}(m-n+1,r',m-n)\hspace{4cm} \\
= (-1)^{m-1} b_{m-1,\bar{r}}(m-n,r'') c_{\bar{r}r}(m,r,m-1) -(-1)^{n(m-n)}c_{r''i}(m,r,m-n),
\end{multline*}
and for every indices $(p,q)\neq (s,r''), (p,q)\neq (r'',s)$, \\
$b_{m,r}(m-n+1,r') c_{pq}(m-n+1,r',*)  = (-1)^{m-1} b_{m-1,\bar{*}}(m-n,*'') c_{pq}(m,r,*).$\\
\textbf{Case III:} Whenever $w=r$, $y=x=s''$, $i+s''=r$, we have the following set of \textit{recurrence} relations on the scalars $b_{m,r}(m-n+1,r'),$
\begin{multline*}
 b_{m,r}(m-n+1,r') c_{rs''}(m-n+1,r',1)  \\
= (-1)^{m-1}b_{m-1,\bar{s}}(m-n,s'') c_{r\bar{s}}(m,r,1) +  c_{is''}(m,r,n),\\
 b_{m,r}(m-n+1,r') c_{s''r}(m-n+1,r',m-n) \hspace{4cm} \\
= (-1)^{m-1} b_{m-1,\bar{s}}(m-n,s'') c_{\bar{s}r}(m,r,m-1) -(-1)^{n(m-n)}c_{s''i}(m,r,m-n),
\end{multline*}
and for every indices $(p,q)\neq (r,s''), (p,q)\neq (s'',r)$ \\
$b_{m,r}(m-n+1,r') c_{pq}(m-n+1,r',*)  = (-1)^{m-1} b_{m-1,\bar{*}}(m-n,*'') c_{pq}(m,r,*)$.\\

\textbf{Case IV:} Whenever $w=s$, $y=x=s''$, $i+s''=r$, we have the following set of \textit{recurrence} relations on the scalars $b_{m,r}(m-n+1,r'),$
\begin{multline*}
 b_{m,r}(m-n+1,r') c_{ss''}(m-n+1,r',1)  \\
= (-1)^{m-1}b_{m-1,\bar{s}}(m-n,s'') c_{s\bar{s}}(m,r,1) +  c_{is''}(m,r,n),\\
 b_{m,r}(m-n+1,r') c_{s''s}(m-n+1,r',m-n) \hspace{4cm}\\
= (-1)^{m-1} b_{m-1,\bar{s}}(m-n,s'') c_{\bar{s}s}(m,r,m-1)-(-1)^{n(m-n)}c_{s''i}(m,r,m-n),
\end{multline*}
and for every indices $(p,q)\neq (s,s''), (p,q)\neq (s'',s)$\\
$b_{m,r}(m-n+1,r') c_{pq}(m-n+1,r',*)  = (-1)^{m-1} b_{m-1,\bar{*}}(m-n,*'') c_{pq}(m,r,*).$\\

More generally, if $\K_{m}$ is generated by $\{\varepsilon^{m}_r\}_{r=1}^{t_{m}}$, $\K_{m-1}$ generated by $\{\varepsilon^{m-1}_{\bar{r}}\}_{\bar{r}=1}^{t_{m-1}}$, $\K_{m-n}$ by $\{\varepsilon^{m-n}_{r''}\}_{r''=1}^{t_{m-n}}$, and $\K_{m-n+1}$ by $\{\varepsilon^{m-n+1}_{r'}\}_{r'=1}^{t_{m-n+1}}$, the following relations hold for all $r,r',r''$ and $\bar{r}$
\begin{multline*}
(i)\; b_{m,r}(m-n+1,r') c_{rr''}(m-n+1,r',1) \\
 = (-1)^{m-1}b_{m-1,\bar{r}}(m-n,r'') c_{r\bar{r}}(m,r,1) +  c_{ir''}(m,r,n),\\
 b_{m,r}(m-n+1,r') c_{r''r}(m-n+1,r',m-n) \hspace{4cm}\\
= (-1)^{m-1} b_{m-1,\bar{r}}(m-n,r'') c_{\bar{r}r}(m,r,m-1) -(-1)^{n(m-n)}c_{r''i}(m,r,m-n),
\end{multline*}
and for every pair of indices $(p,q)\neq (r,r'')$, $(p,q)\neq (r'',r)$, \\
(ii) $b_{m,r}(m-n+1,r') c_{pq}(m-n+1,r',*)  = (-1)^{m-1} b_{m-1,\bar{*}}(m-n,*'') c_{pq}(m,r,*).$
\end{proof}

\begin{theorem}\label{homotopylifting1-1}
Let $Q$ be a finite quiver and $\Lambda=kQ/I$ a quiver algebra that is Koszul. Suppose that $\eta :\K_n\rightarrow \Lambda$ is a cocycle such that  $ \eta = \begin{pmatrix} 0 & \cdots & 0 & (f^1_w)^{(i)} & 0 & \cdots  & 0\end{pmatrix}$, $0\leq w\leq t_1$ and $f^1_w$ is path of length 1. There are scalars $ \lambda_{m,r}(m-n+1,r')$ such that the map $\psi_\eta :\K\rightarrow \K[1-n]$ associated to $\eta$ and defined in degree $m$ by
$$\psi_\eta (\varepsilon^m_r) = \lambda_{m,r}(m-n+1,r')\varepsilon^{m-n+1}_{r'}$$
is a homotopy lifting map for $\eta$. 
\end{theorem}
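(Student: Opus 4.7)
The plan is to reduce the theorem to Lemma \ref{lemma-homotopylifting-1} by constructing, inductively in $m$, scalars $\lambda_{m,r}(m-n+1,r')$ that satisfy the recurrence relations \eqref{big-equation}. Once such scalars are produced, Lemma \ref{lemma-homotopylifting-1} gives directly that the map $\psi_\eta(\varepsilon^m_r)=\lambda_{m,r}(m-n+1,r')\,\varepsilon^{m-n+1}_{r'}$ obeys the first equation of \eqref{homo-defi}, and the second equation is dealt with using Remark \ref{rema-koszul-homo}.

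The base case of the induction is the range $0\le m\le n-1$. Here the target $\K_{m-n+1}$ has non-positive homological degree, so we are forced to put $\psi_\eta\vert_{\K_{m}}=0$, which corresponds to choosing all scalars $\lambda_{m,r}(m-n+1,r')=0$ for $m\le n-1$. In particular $\lambda_{n-1,\bar{r}}(0,r'')=0$ for every index, matching the normalization discussed after the display in the text preceding the lemma, and by Remark \ref{rema-koszul-homo} this also ensures $\mu_{\K}\psi_\eta\sim 0$ (with $\psi=0$), so the second half of Definition \ref{homolift} is satisfied.

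For the inductive step, assume scalars $\lambda_{m-1,\bar{r}}(m-n,r'')$ have been chosen. For each fixed pair $(r,r')$ with $\K_m$-basis element $\varepsilon^m_r$ and $\K_{m-n+1}$-basis element $\varepsilon^{m-n+1}_{r'}$, relation (i) of \eqref{big-equation} expresses $\lambda_{m,r}(m-n+1,r')$ explicitly as a linear combination of already-known quantities: the previous-stage scalars $\lambda_{m-1,\bar{r}}(m-n,r'')$ and the structure constants $c_{p,q}(*,r,*)$ coming from the comultiplicative identity \eqref{comult-struc} and appearing in the differential \eqref{diff-k} and the diagonal \eqref{diag-k}. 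I would use this single equation to define the scalar, provided the coefficient $c_{r,r''}(m-n+1,r',1)$ on the left is nonzero for the relevant admissible indices; the admissibility of the triple $(r,r',r'')$ is built into the nesting of the filtration of Section~\ref{prelim} coming from \cite{AAR,ROKA}.

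The main obstacle is to verify that the choice just made for $\lambda_{m,r}(m-n+1,r')$ is consistent with all the other equations of \eqref{big-equation}, in particular with the family (ii) and with the two versions of (i) (one originating from the $f^1_r\varepsilon^{m-n}_{r''}$ summand, one from the $\varepsilon^{m-n}_{r''}f^1_r$ summand). My plan for this is to translate the consistency statement into the identity $\textbf{d}\circ\textbf{d}=0$ on $\K$, combined with the fact that $\Delta_{\K}$ is a chain map with respect to $d$, i.e.\ $(d\otimes 1+1\otimes d)\Delta_{\K}=\Delta_{\K}\,d$, which for Koszul $\K$ is a consequence of the comultiplicative equation \eqref{comult-struc} together with Remark \ref{rema-koszul-homo}. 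Expanding $(d\otimes 1+1\otimes d)\Delta_{\K}(\varepsilon^m_r)=\Delta_{\K}d(\varepsilon^m_r)$ in the basis $\{\varepsilon^{u}_{x}\otimes_\Lambda\varepsilon^{v}_{y}\}$ yields quadratic identities among the $c_{p,q}(*,*,*)$ of exactly the form needed to reconcile the distinct instances of (i) and (ii) for the same pair $(r,r')$. Thus the linear system defining $\lambda_{m,r}(m-n+1,r')$ is compatible, a single scalar satisfies every equation of \eqref{big-equation} simultaneously, and the inductive step goes through. Applying Lemma \ref{lemma-homotopylifting-1} to the scalars so constructed completes the proof.
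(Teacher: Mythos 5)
Your core reduction is exactly the paper's: the published proof of Theorem \ref{homotopylifting1-1} consists of one line, ``let $\lambda_{m,r}(m-n+1,r')=b_{m,r}(m-n+1,r')$ be the scalars satisfying the recurrence relations of \eqref{big-equation}; by Lemma \ref{lemma-homotopylifting-1} the homotopy-lifting identity holds.'' Where you differ is that you try to \emph{prove} that such scalars exist, by induction on $m$ starting from $\psi_\eta\vert_{\K_{n-1}}=0$, defining each $\lambda_{m,r}(m-n+1,r')$ from one instance of relation (i), and then arguing that the remaining equations of \eqref{big-equation} are automatically compatible because $d\circ d=0$ and $(d\otimes 1+1\otimes d)\Delta_{\K}=\Delta_{\K}d$ force quadratic identities among the $c_{p,q}$. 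The paper never carries out this step: in the setup preceding Lemma \ref{lemma-homotopylifting-1} the scalars are simply \emph{posited} to satisfy \eqref{big-equation}, so the existence claim in the theorem statement is effectively absorbed into a hypothesis. Your addition is therefore the more honest reading of what needs to be shown, but be aware that it is the one part of your argument that is only a sketch: the system \eqref{big-equation} is genuinely over-determined (one scalar must satisfy every equation indexed by the pairs $(p,q)$), you have not actually derived the needed quadratic identities from the coassociativity and chain-map properties, and you wave at the possibility that $c_{r,r''}(m-n+1,r',1)=0$ without resolving it. If you intend your write-up to stand alone rather than cite the paper's convention, that consistency verification is the substantive work still to be done; as a match against the paper's own proof, everything you do beyond invoking Lemma \ref{lemma-homotopylifting-1} and Remark \ref{rema-koszul-homo} is extra.
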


\begin{proof}
Let $\lambda_{m,r}(m-n+1,r') = b_{m,r}(m-n+1,r')$ be the scalars satisfying the recurrence relations of \eqref{big-equation}. By Lemma \ref{lemma-homotopylifting-1}, the equation 
$d\psi_\eta - (-1)^{1-n}\psi_\eta d = (\eta\ot 1 - 1 \ot \eta)\Delta_{\K}$ holds, so $\psi_\eta$ is a homotopy lifting map.
\end{proof}

\begin{theorem}\label{homotopylifting1-2}
Let $Q$ be a finite quiver and let $\Lambda=kQ/I$ be a quiver algebra that is Koszul. Suppose that $\eta :\K_n\rightarrow \Lambda$ is a cocycle such that  $ \eta = \begin{pmatrix} 0 & \cdots & 0 & (f^1_wf^1_{w'})^{(i)} & 0 & \cdots  & 0\end{pmatrix}$ for some $0\leq w,w'\leq t_1$ where $f^1_w$ and $f^1_{w'}$ are paths of length 1. Then there exist scalars $\lambda_{m,r}(m-n+1,s)$ and $\lambda_{m,r}(m-n+1,s')$ such that $\psi_\eta :\K_{m} \rightarrow \K_{m-n+1}$ defined by
$$\psi_\eta (\varepsilon^m_r) = \lambda_{m,r}(m-n+1,s)f^1_{w}\varepsilon^{m-n+1}_{s} + \lambda_{m,r}(m-n+1,s')\varepsilon^{m-n+1}_{s'} f^1_{w'}$$
for all $\varepsilon^m_r$ is a homotopy lifting map for $\eta$.
\end{theorem}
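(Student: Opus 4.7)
The plan is to mirror the strategy used for Lemma \ref{lemma-homotopylifting-1} and Theorem \ref{homotopylifting1-1}, but with the cocycle $\eta$ now taking values in paths of length 2. First I would compute $(d\psi_\eta - (-1)^{1-n}\psi_\eta d)(\varepsilon^m_r)$ explicitly. Applying the differential \eqref{diff-k} to $\psi_\eta(\varepsilon^m_r) = \lambda_{m,r}(m-n+1,s)f^1_w\varepsilon^{m-n+1}_s + \lambda_{m,r}(m-n+1,s')\varepsilon^{m-n+1}_{s'}f^1_{w'}$ produces, on each summand, expansions governed by the scalars $c_{pq}(m-n+1,s,\ast)$ and $c_{pq}(m-n+1,s',\ast)$. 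These yield bimodule-basis elements of the form $f^1_w f^1_\alpha \varepsilon^{m-n}_\ast$, $f^1_w \varepsilon^{m-n}_\ast f^1_\beta$, $f^1_\alpha \varepsilon^{m-n}_\ast f^1_{w'}$ and $\varepsilon^{m-n}_\ast f^1_\beta f^1_{w'}$. Similarly, the term $\psi_\eta d(\varepsilon^m_r)$ is obtained by first expanding $d(\varepsilon^m_r)$ via \eqref{diff-k} and then inserting the inductive formula for $\psi_\eta$ on each $\varepsilon^{m-1}_{\bar r}$.

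Next I would compute the right-hand side $(\eta\otimes 1 - 1\otimes\eta)\Delta_{\K}(\varepsilon^m_r)$ using \eqref{diag-k}. Since $\eta(\varepsilon^n_i) = f^1_w f^1_{w'}$ and vanishes on all other basis elements, only the terms with left factor $\varepsilon^n_i$ or right factor $\varepsilon^n_i$ survive, producing
\begin{equation*}
c_{i,y}(m,r,n)\, f^1_w f^1_{w'}\, \varepsilon^{m-n}_y \;-\; (-1)^{n(m-n)} c_{x,i}(m,r,m-n)\, \varepsilon^{m-n}_x\, f^1_w f^1_{w'}.
\end{equation*}
The crucial structural observation is that the length-2 word $f^1_w f^1_{w'}$ appearing on the right naturally splits across the tensor: in the left-hand side, the coefficient of $f^1_w \varepsilon^{m-n}_y f^1_{w'}$-type terms (when $y$ is adjacent to $f^1_w$) vanishes by admissibility of $I$ for the Koszul algebra, so each term $f^1_w f^1_{w'}\varepsilon^{m-n}_y$ is matched only by the $\lambda_{m,r}(m-n+1,s)$ piece of $\psi_\eta$, and each term $\varepsilon^{m-n}_x f^1_w f^1_{w'}$ is matched only by the $\lambda_{m,r}(m-n+1,s')$ piece.

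By separating these two channels, matching coefficients on each bimodule basis element gives two independent systems of recurrence relations, one for $\lambda_{m,r}(m-n+1,s)$ and one for $\lambda_{m,r}(m-n+1,s')$, each of the same shape as \eqref{big-equation}. Concretely, I expect relations of the form
\begin{equation*}
\lambda_{m,r}(m-n+1,s)\, c_{s,y}(m-n+1,s,\ast) \;=\; (-1)^{m-1}\lambda_{m-1,\bar r}(m-n,s^\#)\, c_{\ast}(m,r,\ast) + c_{i,y}(m,r,n),
\end{equation*}
together with a dual equation producing $\lambda_{m,r}(m-n+1,s')$ from the right-insertion terms, and vanishing relations for all off-diagonal index pairs exactly as in cases (I)--(IV) of the proof of Lemma \ref{lemma-homotopylifting-1}. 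Setting $\psi_\eta$ on $\K_{n-1}$ equal to zero (as permitted by Remark \ref{rema-koszul-homo}) provides the base case for the recurrence, and the scalars are then determined inductively in $m$.

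The main obstacle will be the bookkeeping in separating the $f^1_w$-channel from the $f^1_{w'}$-channel: the products $f^1_\alpha f^1_w$ and $f^1_{w'} f^1_\beta$ arising from $d\psi_\eta$ must be shown either to vanish in $\Lambda$ or to match corresponding contributions from $\psi_\eta d$, and this uses the Koszul (quadratic) nature of $I$ together with the admissibility hypothesis. Once this case analysis is organized in the same four-case pattern as in Lemma \ref{lemma-homotopylifting-1} (now doubled, one set per summand of $\psi_\eta$), the verification of \eqref{homo-defi-defi} reduces to the existence of solutions of the two recurrence systems, which are consistent by the same argument as before, completing the proof.
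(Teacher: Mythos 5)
Your plan coincides with the paper's own proof, which simply observes that one writes recurrence relations on the two families of scalars in Equation \eqref{map-special-2} exactly as in Lemma \ref{lemma-homotopylifting-1} (one system for the left-multiplied $f^1_w$ terms, one for the right-multiplied $f^1_{w'}$ terms) and defers the detailed bookkeeping to the author's thesis. One small caution: the mixed terms such as $f^1_w\varepsilon^{m-n}_{y}f^1_{q}$ do not vanish automatically by admissibility of $I$; they are eliminated by the off-diagonal recurrence conditions forcing cancellation between $d\psi_\eta$ and $\psi_\eta d$, which is consistent with the cancellation mechanism you correctly describe in your final paragraph.
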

\begin{proof}
Similar to Lemma \ref{lemma-homotopylifting-1}, we can write a recurrence relations on the scalars given in Equation \eqref{map-special-2} so that 
$d\psi_\eta - (-1)^{1-n}\psi_\eta d = (\eta\ot 1 - 1 \ot \eta)\Delta_{\K}$ holds true. See \cite[Lemma 5.20]{TNT} and \cite[Theorem 5.23]{TNT} for details about this.
\end{proof}

\subsection{A special case of Theorem \ref{homotopylifting1-1}} We consider a special case in which each $\Lambda^e$-module $\K_n$ is generated by one element. This case arises for example, from a quiver with one arrow (a loop) on a vertex $e_1$. We also give a concrete example in Example \eqref{example1}. Let $I=(x^n)$ be an ideal of the path algebra $kQ$. The quiver algebra of interest here is Morita equivalent the truncated polynomial ring $A = k[x]/(x^n)$. This is the case where $f^n_r=x^n$ where $r=0$ for all $n$ and $\varepsilon^n_r = 1\ot \widetilde{f^n_r}\ot 1$. From the Preliminaries \eqref{prelim}, there are scalars $c_{p,q}(m,r,u)$ for which the diagonal map is given by
\begin{equation}
\Delta_{\K}(\varepsilon^m_r) = \sum_{u+v=m} c_{i,j}(m,r,u)\varepsilon^u_i\ot_{\Lambda}\varepsilon^{v}_j, 
\end{equation}
with $i=j=r=0$. Also with $p=r=0$, the differential takes the form
$$d(\varepsilon^m_r) = c_{p,r}(m,r,1)f^1_p\varepsilon^{m-1}_r + (-1)^m c_{r,p}(m,r,m-1)\varepsilon^{m-1}_rf^1_p.$$
Let $\chi:\K_n\rightarrow A$ be an $n$-cocycle defined by $\chi(\varepsilon^n_r)=f^1_r$. According to Theorem \eqref{homotopylifting1-1}, a homotopy lifting map for $\chi$ can be given by
$$\psi_{\chi_m}(\varepsilon^m_r) = b_{m,r}(m-n+1,r)\varepsilon^{m-n+1}_r,\qquad r=0.$$
We can determine $b_{m,r}(m-n+1,r)$ from the previous scalar $b_{m-1,r}(m-n,r)$. In order words, the conditions (i) and (ii) of Equation \eqref{big-equation} is a \textit{recurrence relation}. We know from Defintion \eqref{homolift} that homotopy lifting maps satisfy
\begin{align*}
&(d\psi_\chi - (-1)^{m-1}\psi_\chi d)(\varepsilon^m_r) = (\chi\ot 1 - 1\ot\chi)\Delta_{\K}(\varepsilon^m_r), \qquad\text{so then}\\
& d( b_{m,r}(m-n+1,r)\varepsilon^{m-n+1}_r) - (-1)^{m-1}\psi_\chi \\
&\qquad \Big(c_{p,r}(m,r,1)f^1_p\varepsilon^{m-1}_r + (-1)^m c_{r,p}(m,r,m-1)\varepsilon^{m-1}_rf^1_p\Big) \\
&= (\chi\ot 1 - 1\ot\chi) \sum_{u+v=m} c_{i,j}(m,r,u)\varepsilon^u_i\ot_{\Lambda}\varepsilon^{v}_j.
\end{align*}
The modules $\K$ are generated by one elements so we get
\begin{multline*}
 b_{m,r}(m-n+1,r) c_{p,r}(m-n+1,r,1)f^1_p\varepsilon^{m-n}_r + (-1)^{m-n+1}b_{m,r}(m-n+1,r)\\
 c_{r,p}(m-n+1,r,m-n)\varepsilon^{m-n}_rf^1_p  - (-1)^{m-1} b_{m-1,r}(m-n,r) c_{p,r}(m,r,1)f^1_p\varepsilon^{m-n}_r \\
+ b_{m-1,r}(m-n,r) c_{r,p}(m,r,m-1)\varepsilon^{m-n}_rf^1_p \\
= c_{r,j}(m,r,n)f^1_r\varepsilon^{m-n}_j + (-1)^{n(m-n)} c_{i,r}(m,r,m-n)\varepsilon^{m-n}_if^{1}_r
\end{multline*}
We would obtain the following expressions for the above equality to hold,
\begin{align*}
&b_{m,r}(m-n+1,r) c_{p,r}(m-n+1,r,1)- (-1)^{m-1} b_{m-1,r}(m-n,r) c_{p,r}(m,r,1) \\
&= c_{p,r}(m,r,n)\quad\text{and} \\
&(-1)^{m-n+1}b_{m,r}(m-n+1,r) c_{r,p}(m-n+1,r,m-n) \\
&+ b_{m-1,r}(m-n,r) c_{r,p}(m,r,m-1) = (-1)^{n(m-n)} c_{r,p}(m,r,m-n).
\end{align*}
The scalars $c_{p,r}(m-n+1,r,*)$ come from the differentials on the resolution $\K$, so they are not equal to $0$ for all $r$. In case $c_{p,r}(m-n+1,r,*)\neq 0$ for all $r$, the first equality in the last expression yields 
\begin{equation}\label{recur-b}
b_{m,r}(m-n+1,r)  = \frac{(-1)^{m-1} b_{m-1,r}(m-n,r) c_{p,r}(m,r,1) +  c_{p,r}(m,r,n)}{c_{p,r}(m-n+1,r,1)}
\end{equation}
while the second one yields
\begin{multline}\label{recur-bb}
b_{m,r}(m-n+1,r)  \\
= \frac{ b_{m-1,r}(m-n,r) c_{r,p}(m,r,m-1)  +  (-1)^{n(m-n)+1} c_{r,p}(m,r,m-n)}{(-1)^{m-n} c_{r,p}(m-n+1,r,m-n)}.
\end{multline}

We now present the Gerstenhaber bracket structure on Hochschild cohomology using these scalars.

\begin{theorem}\label{Gerstenbrack1}
Let $\Lambda=kQ/I$ be a quiver algebra that is Koszul. Denote by $\{f^m_r\}_{r=0}^{t_m}$ elements of $kQ$ defining a minimal projective resolution of $\Lambda_0$ as a right $\Lambda$-module. Let $\K$ be the projective bimodule resolution of $\Lambda$ with $\K_m$ having basis $\{\varepsilon^m_r\}_{r=0}^{t_m}.$  Assume that $\eta:\K_n\rightarrow\Lambda$ and $\theta:\K_m\rightarrow\Lambda$ represent elements in $\HH^*(\Lambda)$ and are given by $\eta(\varepsilon^n_i) = \lambda_i$ for $i=0,1,\ldots,t_n$ and $\theta(\varepsilon^m_j) = \beta_j$ for $j=0,1,\ldots,t_m,$ with each $\lambda_i$ and $\beta_j$ paths of length of 1. Then the class of the bracket $[\eta, \theta] : \K_{n+m-1}\rightarrow\Lambda$ can be expressed on the $r$-th basis element $\varepsilon^{m+n-1}_r$ as
\begin{align*}
&[\eta, \theta](\varepsilon^{m+n-1}_r) = \sum_{i=0}^{t_n} \sum_{j=0}^{t_m}  b_{m-n+1,r}(n,i)\lambda_i   - (-1)^{(m-1)(n-1)}(b_{m-n+1,r}(m,j)\beta_j
\end{align*}
for some scalars $b_{m-n+1,r}(n,i)$ and $b_{m-n+1,r}(m,j)$ associated with homotopy lifting maps $\psi_{\theta^{(j)}}$ and $\psi_{\eta^{(i)}}$ respectively.
%\begin{align*}
%[\eta, \theta](\varepsilon^{m+n-1}_r) \in &\begin{cases} kQ_1 &{\rm if}\;\lambda_i=f^1_i\;{\rm and }\;\lambda_j=f^1_j, \\
 %kQ_2&{\rm if}\;\lambda_i=f^1_{i}f^1_{i+1}\;{\rm and}\;\lambda_j=f^1_j \\
%kQ_3&{\rm if}\;\lambda_i=f^1_{i}f^1_{i+1}\;{\rm and }\;\lambda_j=f^1_j f^1_{j+1}.
%\end{cases}
%\end{align*}
%for some scalars $b_{*,*}(*,*)$ in $k$ at the chain level. %where $b = b_{m+n-1,i}(n,s)$ and $b^* = b_{m+n-1,i}(m,r)$ are some scalars for which there are homotopy lifting maps $\psi_\eta$ and $\psi_\theta$.
\end{theorem}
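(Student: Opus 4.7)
The plan is to combine the linearity of the homotopy lifting equation with Theorem \ref{homotopylifting1-1} to produce explicit lifts for $\eta$ and $\theta$, and then apply Volkov's formula from Theorem \ref{gbrac1}.

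First, I would decompose the cocycles as $\eta = \sum_{i=0}^{t_n}\eta^{(i)}$ and $\theta = \sum_{j=0}^{t_m}\theta^{(j)}$, where $\eta^{(i)}$ is the cocycle with $\eta^{(i)}(\varepsilon^n_i) = \lambda_i$ and vanishing on all other basis elements, and $\theta^{(j)}$ is defined analogously. The homotopy lifting equation \eqref{homo-defi-defi} is $k$-linear in the cocycle, so if $\psi_{\eta^{(i)}}$ is a homotopy lifting map for each $\eta^{(i)}$, then $\psi_\eta := \sum_i \psi_{\eta^{(i)}}$ serves as a homotopy lifting map for $\eta$, and similarly for $\theta$. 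Each $\lambda_i$ and each $\beta_j$ is a path of length $1$, so Theorem \ref{homotopylifting1-1} applies and furnishes explicit maps $\psi_{\eta^{(i)}}$ and $\psi_{\theta^{(j)}}$ whose values on each basis element $\varepsilon^{m+n-1}_r$ are a single scalar times a single basis element of $\K_m$ or $\K_n$ respectively, with the scalars determined by the recurrence relations \eqref{big-equation}.

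Second, I would expand the two composites appearing in Volkov's formula. Since $\theta$ has degree $m$, the map $\psi_\theta$ shifts by $1-m$, so $\psi_\theta(\varepsilon^{m+n-1}_r) \in \K_n$ and can be written as $\sum_{i} b_{m+n-1,r}(n,i)\varepsilon^n_i$, where (by Theorem \ref{homotopylifting1-1}) for each fixed contribution $\psi_{\theta^{(j)}}$ all but one of the scalars vanish. Summing over $j$ and applying $\eta$ yields
\begin{equation*}
\eta\psi_\theta(\varepsilon^{m+n-1}_r) \;=\; \sum_{j=0}^{t_m}\sum_{i=0}^{t_n} b_{m+n-1,r}(n,i)\,\lambda_i.
\end{equation*}
A symmetric computation applied to $\psi_\eta$, which shifts by $1-n$ and therefore lands in $\K_m$, gives $\theta\psi_\eta(\varepsilon^{m+n-1}_r) = \sum_{i,j} b_{m+n-1,r}(m,j)\,\beta_j$. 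Applying Theorem \ref{gbrac1}, namely $[\eta,\theta] = \eta\psi_\theta - (-1)^{(m-1)(n-1)}\theta\psi_\eta$, and substituting these two expressions produces the claimed formula.

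The principal bookkeeping point, which I expect to be the subtlest aspect, is to keep straight which scalars are attached to which elementary cocycle: the scalars $b_{\cdot,r}(n,i)$ arise from the lifts $\psi_{\theta^{(j)}}$ of the pieces of $\theta$, while the scalars $b_{\cdot,r}(m,j)$ arise from the lifts $\psi_{\eta^{(i)}}$ of the pieces of $\eta$. Once this correspondence is fixed and one respects the convention that the scalars vanish outside the indices selected by the recurrences \eqref{big-equation}, the result reduces to a direct substitution into Volkov's formula and requires no further computation beyond what has already been established in Theorems \ref{gbrac1} and \ref{homotopylifting1-1}.
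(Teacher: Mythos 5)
Your overall strategy is the right one and matches the intended argument: the paper itself does not prove this theorem but defers to \cite[Theorem 3.15]{TNO2}, and the proof there is exactly the substitution of the explicit homotopy liftings of Theorem \ref{homotopylifting1-1} into Volkov's formula (Theorem \ref{gbrac1}) that you describe. Two points deserve attention. First, your decomposition $\eta=\sum_i\eta^{(i)}$ silently assumes that each single-entry piece $\eta^{(i)}$ is itself a cocycle, which is what Theorem \ref{homotopylifting1-1} requires; in general the cocycle condition couples the entries $\lambda_i$ (see the system solved in Example \ref{mc-elements}, where the equation coming from $\varepsilon^3_1$ involves both $\lambda_0$ and $\lambda_1$), so the pieces need not be cocycles individually. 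Since the first homotopy-lifting identity \eqref{homo-defi-defi} is linear in the cochain and, by Remark \ref{rema-koszul-homo}, is the only condition to check in the Koszul setting, your summation argument still goes through \emph{provided} the recurrences \eqref{big-equation} are solvable for each piece separately --- a point that should be stated rather than assumed, and one that the theorem's own phrasing (``scalars associated with homotopy lifting maps $\psi_{\theta^{(j)}}$ and $\psi_{\eta^{(i)}}$'') quietly builds into the hypotheses. Second, your index $b_{m+n-1,r}(n,i)$ is the one consistent with the convention of Equation \eqref{map-special} (the source degree is $m+n-1$), whereas the statement writes $b_{m-n+1,r}(n,i)$; this appears to be a typographical slip in the statement, but you should flag the discrepancy explicitly rather than silently diverge from the displayed formula. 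With those caveats the proposal is a correct reconstruction of the cited proof.
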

\begin{proof}
This is same as \cite[Theorem 3.15]{TNO2} and proved therein.
\end{proof}

\section{Some computations and examples}\label{example-section}
In this section, we give examples in which the scalars $b_{m,r}(m-n+1,*)$ are obtained from  $b_{m-1,r}(m-n,**)$ using the recurrence relations of Theorem \eqref{homotopylifting1-1}, Equations $\eqref{recur-b}$ and $\eqref{recur-bb}$. In most of the examples, we described the scalars $c_{p,q}(m,r,n)$ which are also used in the recurrence relations. 

\begin{example}\label{example1} Let's consider the following quiver
$$Q:= \begin{tikzcd} 1 \arrow[out=100,in=200,loop,swap,"x"]\end{tikzcd},$$
 and take $A=k[x]/(x^n)$ to be the truncated polynomial ring. The idempotent $f^0_0=1$, generates $\K_0$, $f^1_0=x$ generates $\K_1$ and $f^n_0=x^n$ generates $\K_n$. Notice that we can write $f^n_0 =  f^1_0f^{n-1}_0 = f^{n-1}_0f^{1}_0$, so then $c_{0,0}(n,0,1)= c_{0,0}(n,0,n-1)=1.$ If we \textbf{assume that $n=2$}, that is, we mod out by the ideal $I=(x^2)$, some calculations show that using $\varepsilon^n_0=1\ot f^n_0\ot 1$, 
\begin{align*}
d(\varepsilon^1_0) &= d(1\ot x\ot 1) = x(1\ot 1) - (1\ot 1)x =  x\varepsilon^0_0 -\varepsilon^0_0 x\\
d(\varepsilon^2_0) &= d(1\ot x\ot x\ot 1) = x(1\ot x\ot 1) - 1\ot x^2\ot 1 + (1\ot x\ot 1)x = x\varepsilon^1_0 +\varepsilon^1_0 x\\
d(\varepsilon^3_0) &= d(1\ot x\ot x\ot x\ot 1) = x(1\ot x\ot x\ot 1) - (1\ot x\ot x\ot 1)x \\
&= x\varepsilon^1_0 -\varepsilon^1_0 x \quad\text{and more generally}\\
d(\varepsilon^n_0) &= x\varepsilon^{n-1}_0 -(-1)^{n-1}\varepsilon^{n-1}_0 x = f^1_0\varepsilon^{n-1}_0 -(-1)^{n-1}\varepsilon^{n-1}_0 f^1_0,
\end{align*}
Let $\eta:\K_1\rightarrow A$ be defined by $\eta(\varepsilon^1_0)=x$. Also let $\chi:\K_2\rightarrow A$ be defined by $\chi(\varepsilon^2_0)=x$. A short calculation shows that $\eta$ and $\chi$ are cocycles. A diagonal map for this particular resolution is given by  $\Delta_{\K}(\varepsilon^m_0) = \sum_{i+j=m} \varepsilon^i_0\ot\varepsilon^{j}_0.$ It can be verified by direct evaluation of Equation \eqref{homo-defi} that the map 
$$\psi_\eta:\K_m\rightarrow\K_m \qquad \text{defined by}\qquad \psi_\eta(\varepsilon^m_0) = m\varepsilon^m_0$$
is a homotopy lifting map for $\eta$ that is
\begin{align}\label{homo-verify}
&(d\psi_\eta-(-1)^0\psi_\eta d)(\varepsilon^m_0) = d(m\varepsilon^m_0)-\psi_\eta(x\varepsilon^{m-1}_0-(-1)^{m-1}\varepsilon^{m-1}_0x)\\ \notag
&= mx\varepsilon^{m-1}_0 - (-1)^{m-1}m\varepsilon^{m-1}_0x - (m-1)x\varepsilon^{m-1}_0+(-1)^{m-1}(m-1)\varepsilon^{m-1}_0x\\ \notag
&= x\varepsilon^{m-1}_0-(-1)^{m-1}\varepsilon^{m-1}_0x \qquad\text{ is equal to}\\ \notag
&(\eta\ot 1 -1\ot\eta)\Delta_{\K}(\varepsilon^m_0) = (\eta\ot 1 -1\ot\eta)\sum_{i+j=m} \varepsilon^i_0\ot\varepsilon^{j}_0\\ \notag
&=\eta\ot 1(\varepsilon^1_0\ot\varepsilon^{m-1}_0) -(-1)^{m-1} 1\ot\eta(\varepsilon^{m-1}_0\ot\varepsilon^{1}_0)\\ \notag
&=x\varepsilon^{m-1}_0-(-1)^{m-1}\varepsilon^{m-1}_0x,
\end{align}
where the Koszul sign convention has been employed in the expansion of $(1\ot\eta)(\varepsilon^{m-1}_0\ot\varepsilon^{1}_0) = (-1)^{degree(\eta)\cdot (m-1)}\varepsilon^{m-1}_0\eta(\varepsilon^{1}_0)$. We note that by the general definition given in Theorem \ref{homotopylifting1-1}, the map $\psi_\eta:\K_{m-1}\rightarrow{}\K_{m-1}$ defined by $\psi_\eta(\varepsilon^{m-1}_0)=(m-1)\varepsilon^{m-1}_0$ implies that $b_{m-1,0}(m-1,0)=m-1$. The map $\eta$ is a $1$-cocycle so $n=1$, $r=p=0.$

%We can use the expression of \eqref{recur-b} to find that
%$$b_{m+1,0}(m+1,0) =\frac{(-1)^{m} b_{m,0}(m,0) c_{0,0}(m,0,1) +  c_{0,0}(m,0,1)}{c_{0,0}(m-n+1,0,1)} = \frac{-m+1}{1}.$$

We can use the expression of \eqref{recur-bb} to verify that
\begin{align*}
&b_{m,r}(m-n+1,r)   \\
&= \frac{ b_{m-1,r}(m-n,r) c_{r,p}(m,r,m-1)  +  (-1)^{n(m-n)+1} c_{r,p}(m,r,m-n)}{(-1)^{m-n} c_{r,p}(m-n+1,r,m-n)}\\
b_{m,0}(m,0)  &= \frac{ b_{m-1,0}(m-1,0) c_{0,0}(m,0,m-1)  +  (-1)^{m} c_{0,0}(m,0,m-1)}{(-1)^{m-1} c_{0,0}(m,0,m-1)} \\
&= \frac{m-1+(-1)^{m}}{1} = m, \qquad \text{when $m$ is even},
\end{align*}
and the expression of \eqref{recur-b} to verify that
\begin{align*}
b_{m,0}(m,0)  &= \frac{(-1)^{m-1} b_{m-1,0}(m-1,0) c_{0,0}(m,0,1)  +   c_{0,0}(m,0,1)}{c_{0,0}(m,0,1)} \\
&= \frac{m-1+ 1}{1} = m, \qquad \text{when $m$ is odd}.
\end{align*}

Similarly, it is a straightforward calculation (same calculations as \eqref{homo-verify}) to verify that the map 
$\psi_\chi:\K_m\rightarrow\K_{m-1}$ defined by
$$\psi_\chi(\varepsilon^m_0) = b_{m,0}(m-1,0)\varepsilon^{m-1}_0 = \begin{cases} \varepsilon^{m-1}_0, & \rm{ when\; m\;is\;even}\\ 0, &\rm{ when\; m\;is\;odd}\end{cases}$$
is a homotopy lifting map for $\chi$. In this case $b_{m,0}(m-1,0)=1$ when $m$ is even and $0$ when $m$ is odd. But we can also use the expression of \eqref{recur-b} to verify that when $m$ is even,  
$$b_{m+1,0}(m,0) =\frac{(-1)^{m-1} b_{m,0}(m-1,0) c_{0,0}(m,0,1) +  c_{0,0}(m,0,2)}{c_{0,0}(m-n+1,0,1)} = -1+1 = 0,$$ 
and when $m$ is odd,  
$$b_{m+1,0}(m,0) =\frac{(-1)^{m-1} b_{m,0}(m-1,0) c_{0,0}(m,0,1) +  c_{0,0}(m,0,2)}{c_{0,0}(m-n+1,0,1)} =  0+1 = 1.$$ 
\end{example}
%%%%%%%%%%%%%%%%%%%%%%%%%%%%%% EXAMPLE

\begin{example}\label{example2a}
The following example of a homotopy lifting map was first given in \cite[Example 4.7.2]{NVW}. We will now verify that the recurrence relations also hold. Let $k$ be a field and $A=k[x]/(x^3)$. Consider the following projective bimodule resolution of $A$:
\begin{equation*}
\mathbb{P}_{\bullet}:  \qquad\cdots \rightarrow A\ot A\xrightarrow{\;\cdot u\; }A\ot A\xrightarrow{\cdot u\;} \cdots \xrightarrow{\;\cdot v\;}A\ot A\xrightarrow{\;\cdot u\;}A\ot A\;(\;\xrightarrow{\mu} A)
\end{equation*}
where $u=x\ot 1 -1\ot x$ and $v = x^2\ot 1 + x\ot x + 1\ot x^2$. We consider the following elements $e_m:=1\ot 1$, $r=0$ for all $m$ in the $m$-th module $P_m :=A\ot A$. A diagonal map $\Delta_{\mathbb{P}}: \mathbb{P}\xrightarrow{} \mathbb{P}\ot_A\mathbb{P}$ for this resolution is given by
\begin{equation*}
\Delta_{\mathbb{P}}(e_m) = \sum_{j+l=m} (-1)^{j}e_j\ot e_l.
\end{equation*}
By comparing $\Delta_{\mathbb{P}}(e_m)$ with Equation \eqref{diag-k}, the scalars $c_{rr}(m,r,j) = (-1)^j$ for all $m,r$. Consider the Hochschild 1-cocycle $\alpha:\mathbb{P}_1\xrightarrow{} A$ defined by $\alpha(e_1)=x$ and $\alpha(e_m)=0$ for all $m\neq 1$. With a slight change in notation, it was shown in \cite[Example 4.7.2]{NVW} that the following $\psi_{\alpha}:\mathbb{P}_{2m}\xrightarrow{}\mathbb{P}_{2m}$ defined by $\psi_{\alpha}(e_{2m}) = -3m\cdot e_{2m}$ is a homotopy lifting map for $\alpha$. We note that the map $\psi_{\alpha}$ was regarded as an $A_{\infty}$-coderivation in \cite{NVW}. It can be also verified that $\psi_{\alpha}$ is a homotopy lifting map. We can use the recurrence relations of Equation \eqref{recur-bb} to obtain $b_{2m+1,r}(2m+1,r)$ from $b_{2m,r}(2m,r)=-3m$. That is
\begin{align*}
b_{2m+1,r}(2m+1,r) &=\frac{ b_{2m,r}(2m,r) c_{r,r}(2m,r,2m) + (-1)^{2m+1} c_{r,r}(2m+1,r,2m)}{(-1)^{2m}c_{r,r}(2m+1,r,2m)}\\ &= \frac{-3m(-1)^{2m}+(-1)^{2m+1}(-1)^{2m}}{(-1)^{2m}(-1)^{2m-1+1}} = \frac{-3m-1}{1},
\end{align*}
so it follows that  $\psi_{\alpha}:\mathbb{P}_{2m+1}\xrightarrow{}\mathbb{P}_{2m+1}$ is defined by $\psi_{\alpha}(e_{2m+1}) = (-3m-1) e_{2m+1}.$
\end{example}
%%%%%%%%%%%%%%%%%%%%%%%%%% EXAMPLE

\begin{example}\label{example2}
Let $k$ be a field of characteristics different from 2. Consider the quiver algebra $A=kQ/I$ (also examined in \cite[ Example 5]{MSKA}) defined using the following finite quiver:
$$\begin{tikzcd} 1 \arrow[out=100,in=200,loop,swap,"x"]
  \arrow[out=340,in=70,loop,swap,"y"]
\end{tikzcd}$$
with one vertex and two arrows  $x,y.$ We denote by $e_1$ the idempotent associated with the only vertex. Let $I$, an ideal of the path algebra $kQ$ be defined by $\displaystyle{I = \langle x^2,xy+yx\rangle.}$ Since $\{x^2, xy+yx\}$ is a quadratic Grobner basis for the ideal generated by relations under the length lexicographich order with $x>y>1$, the algebra is Koszul.

In order to define a comultiplicative structure, we take $t_0=0, t_n=1$ for all $n$, $f_0^0=e_1, f_1^0=0, f_0^1=x, f_1^1=y, f^2_0=x^2, f^2_1=xy+yx, f_0^3=x^3, f_1^3 = x^2y+xyx+yx^2,$ and in general $ f^n_0 = x^n, f^n_1 = \sum_{i+j=n-1}x^iyx^j$. We also see that $f^n_0 = f^r_0 f^{n-r}_0$ and $f^n_1 = f^r_0f^{n-r}_1 + f^r_1f^{n-r}_0$ so $c_{00}(n,0,r) = c_{01}(n,1,r) = c_{10}(n,1,r) = 1$ and all other $c_{pq}(n,i,r)=0$. With the above stated, we can construct the resolution $\K$ for the algebra $A$. A calculation shows that 
\begin{align*}
&d_1(\varepsilon^1_0) = x\varepsilon^0_0 - \varepsilon^0_0 x, && d_1(\varepsilon^1_1) = y\varepsilon^0_0 - \varepsilon^0_0 y\\
&d_2(\varepsilon^2_0) = x\varepsilon^1_0 + \varepsilon^1_0 x, && d_2(\varepsilon^2_1)= y\varepsilon^1_0 + \varepsilon^1_0 y + x\varepsilon^1_1 + \varepsilon^1_1 x.
\end{align*}
Consider the following map $\theta:\K_1\xrightarrow{}A$ defined by $\theta = (0\;\; y)$. With the following calculations
$\theta d_2(\varepsilon^2_0) = \theta (x\varepsilon^1_0 + \varepsilon^1_0 x) = 0$ and $\theta d_2(\varepsilon^2_1) = \theta (y\varepsilon^1_0 + \varepsilon^1_0 y + x\varepsilon^1_1 + \varepsilon^1_1 x) = 0 + xy + yx = 0$, $\theta$ is a cocycle. The comultiplicative map $\Delta:\K\xrightarrow{}\K\ot_A\K$ on $\varepsilon^1_0$, $\varepsilon^1_1, \varepsilon^2_0,\varepsilon^2_1$ is given by 
\begin{align*}
\Delta(\varepsilon^1_0) &= c_{00}(1,0,0)\varepsilon^0_0\ot\varepsilon^1_0 + c_{00}(1,0,1)\varepsilon^1_0\ot\varepsilon^0_0=  \varepsilon^0_0\ot\varepsilon^1_0 + \varepsilon^1_0\ot\varepsilon^0_0,\\
\Delta(\varepsilon^1_1) &= \varepsilon^0_0\ot\varepsilon^1_1 + \varepsilon^1_1\ot\varepsilon^0_0,\\
\Delta(\varepsilon^2_0) &= \varepsilon^0_0\ot\varepsilon^2_0 + \varepsilon^1_0\ot\varepsilon^1_0 +\varepsilon^2_0\ot\varepsilon^0_0,\\
\Delta(\varepsilon^2_1) &=  \varepsilon^0_0\ot\varepsilon^2_1 + \varepsilon^1_0\ot\varepsilon^1_1 +\varepsilon^1_1\ot\varepsilon^1_0 + \varepsilon^2_1\ot\varepsilon^0_0.
\end{align*}
From Theorems \eqref{homotopylifting1-1}, it can be verified by direct calculations using Equation \eqref{homo-verify} that the first, second and third degrees of the homotopy lifting maps $\psi_\theta$ associated $\theta$ is the following:
\begin{align*}
\psi_{\theta_0}(\varepsilon^0_i) = 0,\qquad & \psi_{\theta_1}(\varepsilon^1_0) = 0, \qquad \psi_{\theta_1}(\varepsilon^1_1) = \varepsilon^1_1 && \psi_{\theta_2}(\varepsilon^2_0) = 0.%\; \psi_{\theta_2}(\varepsilon^2_1) = \varepsilon^2_1. 
\end{align*}
The scalars $b_{1,1}(1,1)= 1$  and for other $(m,r)\neq (1,1), (2,1)$, $b_{m,r}(m,r)=0$. Since $\theta$ is a $1$-cocycle, $n=1$. Also, $\theta = (0\; y)$ and when compared with $\eta = \begin{pmatrix} 0 & \cdots & 0 & (f^1_w)^{(i)} & 0 & \cdots  & 0\end{pmatrix}$ as given in Theorem \eqref{homotopylifting1-1}, $f^1_w=y$ and $i=1$. To obtain $ b_{2,1}(2,1)$ from $ b_{1,r}(1,s)$ for some $s$, we take $m=2, r=1$, so that $m-n=1$. Since $t_{m-n}=t_1$, we would have $r''=0$ or $r''=1$. From the statement of the theorem, we must have $i+r''=r$, so $r''=0$ and $c_{r,r''}(m-n+1,r',1) = c_{10}(2,r',1)=1$. It then follows from the first recurrence relations in Theorem \eqref{homotopylifting1-1} that 
\begin{align*}
b_{m,r}(m-n+1,r')  &= \frac{(-1)^{m-1} b_{m-1,\bar{r}}(m-n,r'') c_{r,\bar{r}}(m,r,1)  +  c_{i,r''}(m,r,n)}{c_{r,r''}(m-n+1,r',1)}.\\
b_{2,1}(2,1)  &=  \frac{- b_{1,\bar{r}}(1,0) c_{1,\bar{r}}(2,1,1)  +  c_{1,0}(2,1,1)}{ c_{1,0}(2,1,1)} = \frac{0+1}{1}=1,
\end{align*}
so  $\psi_{\theta_2}(\varepsilon^2_1) = b_{2,1}(2,1)\varepsilon^2_1 =\varepsilon^2_1.$ 
%%%%%%%%%%%%%%%%%%%%%% EXAMPLE n=1,m=2,r=1
%%%%%%%%%%%%%%%%%%%%%%%%%
%%%%%%%%%%%%%%%%%%%%%%%%
%%%%%%%%%%%%%%%%%%%%%%%%
\end{example}

\section{Finding Maurer-Cartan elements}\label{finding-mc}
In this section, we find the Maurer-Cartan elements of a quiver algebra. We first recall the definition of a Maurer-Cartan element.
\begin{definition}\label{MC-defi}
A Hochschild 2-cocycle $\eta$ is said to satisfy the Maurer-Cartan equation if
\begin{equation}\label{MC-equation}
d(\eta) + \frac{1}{2}[\eta,\eta] = 0.
\end{equation}
Applying the definition of the bracket using homotopy lifting, we obtain the following version of the Maurer-Cartan equation for the resolution $\K$.
$ d^{*}_{3}(\eta) + \frac{1}{2}(\eta\psi_\eta + \eta\psi_\eta) = d^{*}_{3}(\eta) + \eta\psi_\eta = 0.$
\end{definition}

We begin with the following finite quiver:
$$Q:=  \begin{tikzcd} 1 \arrow[out=190,in=270,loop,swap,"b"]
  \arrow[out=90,in=170,loop,swap,"a"]
  \arrow[r,"c"] & 2
\end{tikzcd}$$
with two vertices and three arrows  $a,b,c.$ We denote by $e_1$ and $e_2$ the idempotents associated with vertices 1 and 2. Let $kQ$ be the path algebra associated with $Q$ and take for each $q\in k$, $I_q\subseteq kQ$ to be an admissible ideal of $kQ$ generated by $I_q = \langle a^2,b^2,ab-qba, ac\rangle$ so that
$$\{A_q = kQ/I_q\}_{q\in k}$$
This family of quiver algebras have been well studied in \cite{TNO, TNO2} and \cite{TNT}. We simply recall the main tools needed to find Maurer-Cartan elements. To define a set of generators for the resolution $\K$ we start by letting $kQ_0$ to be the ideal of $kQ$ generated by the vertices of $Q$ with basis $f_0^0=e_1,f_1^0=e_2.$ Next, set $kQ_1$ to be the ideal generated by paths with basis $f_0^1=a,f_1^1=b$ and $f_2^1=c.$ Set $f^2_j,$ $j=0,1,2,3$ to be the set of paths of length 2 that generates the ideal $I$, that is
$f_0^2=a^2,f_1^2=ab-qba,f_2^2=b^2, f^2_3= ac,$ and define a comultiplicative equation on the paths of length $n>2$ in the following way.
$$\begin{cases} 
f^n_0 = a^{n}, \\
f^n_s = f^{n-1}_{s-1} b + (-q)^s f^{n-1}_s a ,& (0<s<n), \\
f^n_n = b^{n}, \\
f^n_{n+1} = a^{(n-1)} c,
\end{cases}$$
%with each $f^n_i$ viewed as an element in $(\Lambda_q^1)^{\ot n} (\ot = \ot_{\Lambda_q^0})$ for all $n$. 
The resolution $\K \rightarrow A_q$ has basis elements $\{\varepsilon^n_i\}_{i=0}^{t_n}$ such that for each $i$, we have $\varepsilon^n_i = (0,\ldots,0,o(f^n_i)\otimes_k t(f^n_i),0,\ldots,0)$. The differentials on $\K_n$ are given explicitly for this family  by
\begin{align*}
d_1(\varepsilon^1_2) &= c\varepsilon^0_1 - \varepsilon^0_0 c\\
d_n(\varepsilon^n_r) &= (1-\partial_{n,r})[a\varepsilon^{n-1}_r)+(-1)^{n-r}q^r\varepsilon^{n-1}_r a] \\
&+(1-\partial_{r,0})[(-q)^{n-r}b\varepsilon^{n-1}_{r-1}+(-1)^{n}\varepsilon^{n-1}_{r-1} b], \;\;\text{for}\;\;r\leq n\\
d_n(\varepsilon^n_{n+1}) &= a\varepsilon^{n-1}_n + (-1)^n\varepsilon^{n-1}_0 c, \;\;\text{when}\;\;n\geq 2,
\end{align*}
where $\partial_{r,s} = 1$ when $r=s$ and $0$ when $r\neq s$.

Calculations from~\cite{TNO} show that for this family, the comultiplicative map can be expressed in the following way
\begin{equation*}
\Delta_{\K}(\varepsilon^{n}_{s}) =
\begin{cases}
\displaystyle{\sum_{r=0}^{n} \varepsilon^r_0\ot \varepsilon^{n-r}_0,} &  s=0 \\
\displaystyle{ \sum_{w=0}^{n}\sum_{j=max\{0,s+w-n\}}^{min\{w,s\}} (-q)^{j(n-s+j-w)} \varepsilon^w_j\ot \varepsilon^{n-w}_{s-j}, } & 0< s < n\\
\displaystyle{\sum_{t=0}^{n} \varepsilon^t_t\ot \varepsilon^{n-t}_{n-t}, } & s=n \\
\displaystyle{ \varepsilon^0_0\ot \varepsilon^{n}_{n+1} + \Big[ \sum_{t=0}^{n} \varepsilon^t_0\ot \varepsilon^{n-t}_{n-t+1}\Big] +  \varepsilon^n_{n+1}\ot \varepsilon^{0}_{0},} &s=n+1.
\end{cases} 
\end{equation*}
%where in the expansion of $\Delta_{\K}(\varepsilon^{n}_{s}), 0< s< n$, the index $w$ is choosen so there are no repeated terms. 

\begin{example}\label{mc-elements}
Let $A_1 = kQ/I_1$ be a member of the family where $I = I_1= \langle a^2,b^2,ab-ba, ac\rangle$. We now find Hochschild 2-cocycles that satisfy the Maurer-Cartan equation of \ref{MC-equation}. Suppose that the $A_1^e$-module homomorphism $\eta:\K_2\rightarrow A_1$ defined by \\
$ \eta%(\begin{bmatrix} \varepsilon^{2}_{0}\\ \varepsilon^{2}_{1}\\ \varepsilon^{2}_{2}\\  \varepsilon^{2}_{3} \end{bmatrix})
 = \begin{pmatrix} \lambda_{0} & \lambda_{1}& \lambda_{2} &  \lambda_{3}\end{pmatrix}$  is a cocycle, that is $d^*\eta = 0,$ with $\lambda_i\in\Lambda_q$ for all $i$. Since $d^*\eta : \K_3\rightarrow A_1$, we obtain using $d^*\eta(\varepsilon^3_i) = \eta d(\varepsilon^3_i)$, 
$$
\eta \Big(\begin{cases} a\varepsilon^{2}_{0} - \varepsilon^{2}_{0}a & \\
 a\varepsilon^{2}_{1}+\varepsilon^{2}_{1}a +  b\varepsilon^{2}_{0} - \varepsilon^{2}_{0}b & \\ 
 a\varepsilon^{2}_{2}-\varepsilon^{2}_{2}a -  b\varepsilon^{2}_{1} - \varepsilon^{2}_{1}b & \\ 
 b\varepsilon^{2}_{2}- \varepsilon^{2}_{2}b   & \\   
 a\varepsilon^{2}_{3}- \varepsilon^{2}_{0}c  & 
 \end{cases}\Big) = \begin{cases}a\lambda_{0} - \lambda_{0}a,  & {\rm if}\;i=0\\
 a\lambda_{1}+q\lambda_{1}a + q^2 b\lambda_{0} - \lambda_{0}b & {\rm if}\;i=1\\ 
a\lambda_{2}-q^2\lambda_{2}a - q b\lambda_{1} - \lambda_{1}b & {\rm if}\;i=2\\ 
 b\lambda_{2}- \lambda_{2}b   & {\rm if}\;i=3\\   
 a\varepsilon^{2}_{3}- \varepsilon^{2}_{0}c  & {\rm if}\;i=4
 \end{cases}
$$
which will be equated to $\begin{pmatrix} 0 &0&0&0&0\end{pmatrix}$ and solved. We solve this system of equations with the following in mind. There is an isomorphism of $A_1^e$-modules $\HHom_{A_1^e}(A_1 o(f^n_i)\otimes_k t(f^n_i)A_1, A_1) \simeq o(f^n_i)A_1 \; t(f^n_i)$ ensuring that
\begin{align*}
o(f^2_i)\lambda_i t(f^2_i) &= o(f^2_i)\eta(\varepsilon^2_i) t(f^2_i) = o(f^2_i)\eta(o(f^2_i)\otimes_k t(f^2_i)) t(f^2_i)\\
 &= \phi(o(f^2_i)^2\otimes_k t(f^2_i)^2) = \phi(o(f^2_i)\otimes_k t(f^2_i)) = \lambda_i.
 \end{align*}
This means that for $i=0,1,2$ each $\lambda_i$ should satisfy $e_1\lambda_ie_1 = \lambda_i$  since the origin and terminal vertex of $f^2_0, f^2_1, f^2_2$ is $e_1$ and  $e_1\lambda_3e_2 = \lambda_3$. We obtain 9 solutions presented in Table \ref{table 1}.
\begin{table}[t]
\begin{tabular}{ |c|c|c|c|c|c|c|c|c|c| }  \hline
solutions             & 1 & 2  & 3 & 4 & 5 & 6 & 7 & 8 & 9 \\ \hline
$\lambda_0$ & a & ab & 0 & 0 & 0 & 0 & 0 & 0 & 0 \\ 
$\lambda_1$ & 0 & 0   & 0 & 0 & 0 & 0 & ab & 0 & 0 \\
$\lambda_2$ & 0 & 0   & a & b & ab& $e_1$ & 0 & 0 & 0 \\ 
$\lambda_3$ & 0 & 0   & 0 & 0 & 0 & 0 & 0 & c & bc \\  \hline
\end{tabular}
\caption{Possible values of $\eta(\varepsilon^2_r) = \lambda_r$ for different $r$.}
\label{table 1}
\end{table}

Now suppose that there is some $\phi:\K_1\rightarrow A_1$ such that $\phi d_2(\varepsilon^2_i) = \eta(\varepsilon^2_i)$, i=0,1,2,3. If $\phi =  \begin{pmatrix} 0 & \frac{1}{2}a & 0 \end{pmatrix}$, we get $\eta =  \begin{pmatrix} 0 & 0 & ab &  0 \end{pmatrix}$, so  $\eta =  \begin{pmatrix} 0 & 0 & ab &  0 \end{pmatrix} \in \Img(d_2^{*})$. If $\phi$ is equal to  $\begin{pmatrix} 0 & \frac{1}{2}e_1 & 0 \end{pmatrix}$,  $\begin{pmatrix} e_1 & 0 & 0 \end{pmatrix}$ or $\begin{pmatrix} b & 0 & 0 \end{pmatrix}$, we obtain the following for $\eta$; $\begin{pmatrix} 0 & 0 & b &  0 \end{pmatrix}$, $\begin{pmatrix} 0 & 0 & 0 &  c \end{pmatrix}$ and $\begin{pmatrix} 0 & 0 & 0 &  bc \end{pmatrix}$ respectively. Therefore $\HH^2(A_1) = \frac{\Ker d_3^*}{\Img d_2^*}$ is generated as a $k$-vector space by $\langle \widetilde{\eta}, \bar{\eta}, \chi,\bar{\chi},\sigma\rangle $ where
$\widetilde{\eta}= \begin{pmatrix} a & 0 & 0 &  0 \end{pmatrix}$, $\bar{\eta}=\begin{pmatrix} ab & 0 & 0 &  0 \end{pmatrix}$, $\bar{\chi}=\begin{pmatrix} 0 & ab & 0 &  0 \end{pmatrix}$, $\chi=\begin{pmatrix} 0 & 0 & a &  0 \end{pmatrix}$ and $\sigma=\begin{pmatrix} 0 & 0 & e_1 &  0 \end{pmatrix}$.

Given in Table \ref{table 2} are the first, second and third degree homotopy lifting maps associated to each the above elements of $\HH^2(A_1)$. It can be easily verified using the homotopy lifting equation in Definition \eqref{homo-defi} that these indeed are homotopy lifting maps.

\begin{table}[t]
\begin{tabular}{|c c|}\hline
& For $\eta = \begin{pmatrix} a & 0 & 0 &  0 \end{pmatrix},$ we get \\ \hline
& $\psi_{\eta_1} (\varepsilon^1_i) = 0,\;i=0,1,2,$ \\
& $\psi_{\eta_2} (\varepsilon^2_i) = \begin{cases} \varepsilon^1_0, & {\rm if }\;i=0 \\ 0, & {\rm if }\;i=1,2,3 \end{cases}, \quad
\psi_{\eta_3}  (\varepsilon^3_i) = \begin{cases} 0, & {\rm if }\;i=0  \\ \varepsilon^2_{1},& {\rm if }\;i=1 \\ 0,& {\rm if }\;i=2 \\ 0,& {\rm if }\;i=3  \\  \varepsilon^2_{3},& {\rm if }\;i=4 \end{cases} $\\ \hline
%\end{equation}
& For $\chi = \begin{pmatrix} 0 & 0 & a &  0 \end{pmatrix},$ we get \\ \hline
& $\psi_{\chi_1} (\varepsilon^1_i) = 0,\;i=0,1,2,\;$\\
& $\psi_{\chi_2} (\varepsilon^2_i) = \begin{cases} 0, & {\rm if }\;i=0,1,3 \\  \varepsilon^1_0, & {\rm if }\;i=2 \end{cases},, \quad
\psi_{\chi_3}  (\varepsilon^3_i) = \begin{cases} 0, & {\rm if }\;i=0  \\ 0,& {\rm if }\;i=1 \\ 0,& {\rm if }\;i=2 \\  \varepsilon^2_{1},& {\rm if }\;i=3  \\  0,& {\rm if }\;i=4 \end{cases}$ \\ \hline
%\end{equation}
& For $\bar{\eta} = \begin{pmatrix} ab & 0 & 0 &  0 \end{pmatrix},$ we get\\ \hline
& $\psi_{\bar{\eta}_1} (\varepsilon^1_i) = 0,\;i=0,1,2,\;$\\
& $\psi_{\bar{\eta}_2} (\varepsilon^2_i) = \begin{cases} a\varepsilon^1_1+\varepsilon^1_0 b & {\rm if }\;i=0 \\ 0, & {\rm if }\;i=1 \\ 0 & {\rm if }\;i=2\\ 0& {\rm if }\;i=3  \end{cases}, \quad
\psi_{\bar{\eta}_3}  (\varepsilon^3_i) = \begin{cases} -a\varepsilon^2_1, & {\rm if }\;i=0  \\ 0,& {\rm if }\;i=1 \\ 0,& {\rm if }\;i=2 \\  0,& {\rm if }\;i=3  \\  b \varepsilon^2_{3}+ \varepsilon^2_{1}c,& {\rm if }\;i=4 \end{cases}$\\ \hline
%\end{equation}
& For $\bar{\chi} = \begin{pmatrix} 0 & ab & 0 &  0 \end{pmatrix},$ we get \\ \hline
& $\psi_{\bar{\chi}_1} (\varepsilon^1_i) = 0,\;i=0,1,2,\;$\\
& $\psi_{\bar{\chi}_2} (\varepsilon^2_i) = \begin{cases}0 & {\rm if }\;i=0 \\ a\varepsilon^1_1+\varepsilon^1_0 b, & {\rm if }\;i=1 \\ 0 & {\rm if }\;i=2\\ 0& {\rm if }\;i=3  \end{cases}, \quad
\psi_{\bar{\chi}_3}  (\varepsilon^3_i) = \begin{cases} 0, & {\rm if }\;i=0  \\  a\varepsilon^2_{1}- 2\varepsilon^2_{0}b,& {\rm if }\;i=1 \\ \varepsilon^2_1b,& {\rm if }\;i=2 \\  0,& {\rm if }\;i=3  \\ 0 ,& {\rm if }\;i=4 \end{cases}.$\\ \hline
%\end{equation}
& For $\sigma = \begin{pmatrix} 0 & 0 & e_1 &  0 \end{pmatrix},$ we get \\ \hline
& $\psi_{\sigma_1} (\varepsilon^1_i) = 0,\;i=0,1,2,$\\
& $\psi_{\sigma_2} (\varepsilon^2_i) =  0,\;i=0,1,2,3$\\
& $\psi_{\sigma_3}  (\varepsilon^3_i) =  0,\;i=0,1,2,3,4$ \\ \hline
\end{tabular}
\caption{Homotopy lifting maps associated to some cocycles in degrees 1,2,3}
\label{table 2}
\end{table}

\end{example}

The following Lemma follows immediately.
\begin{lemma}\label{MC-lemma}
Let $A_1 = kQ/I_1$ be a member of the family of quiver algebras where $I_1 = \langle a^2,b^2,ab-ba, ac\rangle$. The Hochschild 2-cocycles 
$\eta = \begin{pmatrix} a & 0 & 0 &  0 \end{pmatrix}$, $\chi = \begin{pmatrix} 0 & 0 & a &  0 \end{pmatrix}$, $\bar{\eta} = \begin{pmatrix} ab & 0 & 0 &  0 \end{pmatrix}$, $\bar{\chi} = \begin{pmatrix} 0 & ab & 0 &  0 \end{pmatrix}$ and $\sigma = \begin{pmatrix} 0 & 0 & e_1 &  0 \end{pmatrix}$ are Maurer-Cartan elements.
\end{lemma}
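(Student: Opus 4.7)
The plan is to verify the Maurer-Cartan equation $d^{*}_{3}(\eta) + \eta\psi_\eta = 0$ from Definition~\ref{MC-defi} for each of the five cocycles listed. Since each of $\eta, \chi, \bar\eta, \bar\chi, \sigma$ was already shown in Example~\ref{mc-elements} to represent a class in $\HH^2(A_1)$, the first summand $d^{*}_{3}(\cdot)$ vanishes automatically. Thus the entire content of the lemma reduces to showing that the composite $\eta\psi_\eta : \K_3 \to A_1$ is identically zero, evaluated on the generators $\varepsilon^3_0, \varepsilon^3_1, \varepsilon^3_2, \varepsilon^3_3, \varepsilon^3_4$ of $\K_3$.

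For each cocycle, I would read off $\psi_{\bullet_3}$ from Table~\ref{table 2}, apply the original cochain, and simplify using the defining relations $a^2=0$, $b^2=0$, $ab=ba$, $ac=0$ of $A_1$. Concretely, for $\eta = \begin{pmatrix} a & 0 & 0 & 0\end{pmatrix}$, the only nonzero values of $\psi_{\eta_3}$ are $\varepsilon^2_1$ at $i=1$ and $\varepsilon^2_3$ at $i=4$; both lie in coordinates where $\eta$ vanishes, so $\eta\psi_\eta = 0$. An identical argument handles $\chi$ (with $\psi_{\chi_3}(\varepsilon^3_3) = \varepsilon^2_1$ landing in a vanishing slot) and $\sigma$ (whose $\psi_{\sigma_3}$ is identically zero).

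The two slightly more delicate cases are $\bar\eta$ and $\bar\chi$, which need the relations in $I_1$. For $\bar\eta = \begin{pmatrix} ab & 0 & 0 & 0\end{pmatrix}$, one evaluates $\bar\eta\psi_{\bar\eta_3}(\varepsilon^3_0) = -a\,\bar\eta(\varepsilon^2_1) = 0$ and $\bar\eta\psi_{\bar\eta_3}(\varepsilon^3_4) = b\,\bar\eta(\varepsilon^2_3) + \bar\eta(\varepsilon^2_1)\,c = 0$; the other generators map to zero under $\psi_{\bar\eta_3}$. For $\bar\chi = \begin{pmatrix} 0 & ab & 0 & 0\end{pmatrix}$, the nontrivial evaluations give
\[
\bar\chi\psi_{\bar\chi_3}(\varepsilon^3_1) = a\,\bar\chi(\varepsilon^2_1) - 2\,\bar\chi(\varepsilon^2_0)\,b = a(ab) = a^2 b = 0,
\]
\[
\bar\chi\psi_{\bar\chi_3}(\varepsilon^3_2) = \bar\chi(\varepsilon^2_1)\,b = (ab)b = ab^2 = 0,
\]
invoking $a^2=0$ and $b^2=0$ in $A_1$.

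I do not anticipate any real obstacle: once Table~\ref{table 2} is in hand, the verification is a purely mechanical case-by-case evaluation. The only subtle point is bookkeeping, namely making sure to use the correct degree-three component $\psi_{\bullet_3}$ (not a lower degree) in the composition, and correctly interpreting left/right module actions such as $a\varepsilon^2_1$ and $\varepsilon^2_1 c$ when applying $\bar\eta$ or $\bar\chi$. All simplifications then follow from the admissible relations defining $I_1$.
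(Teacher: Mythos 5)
Your proposal is correct and follows essentially the same route as the paper's own proof: since the five maps are cocycles the term $d_3^{*}(\gamma)$ vanishes, and one observes that $\gamma\psi_{\gamma_3}(\varepsilon^3_i)=0$ in every case, using Table~\ref{table 2} and the relations in $I_1$. Your write-up merely spells out the case-by-case evaluations (e.g.\ $a(ab)=a^2b=0$ and $(ab)b=ab^2=0$ for $\bar{\chi}$) that the paper leaves implicit.
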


\begin{proof}
Let $\gamma$ be any of those elements of $\HH^2(A_1)$. We make use of Equation \eqref{MC-equation}. Since they are all cocycles, $d_3^{*}(\gamma) =0$. Also observe that $\gamma\psi_{\gamma_3}  (\varepsilon^3_i) = 0$ for all $\gamma\in \HH^2(A_1)$, therefore $d_3^*(\gamma) +\gamma\psi_\gamma =0$.
\end{proof}

\section{Deformation of algebras using reduction system}\label{deform-g}

Let $A_1=kQ/I_1$ be a member of family of quiver algebras introduced in Section \ref{finding-mc}, we now show using the combinatorial star product of Equation \eqref{comb-star} that $\HH^2(A)$ has 5 elements satisfying the Maurer-Cartan equation.
% drawing parallels with the earlier result obtained in Example \ref{mc-elements}. We are interested in first order deformations of $A_1$.
\begin{example}\label{final-example}
Recall that for $A_1=kQ/I$, $I= \langle a^2,b^2,ab-ba, ac\rangle$. If we use the set $\{(a^2,0),(b^2,0),(ab,ba),(ac,0)\}$ as the reduction system, this system is reduction finite and reduction unique. All the one overlaps given by $S_3$ resolve to $0$ uniquely. The reduction system
$$R = \{(a^2,0),(b^2,0),(ab,ba),(ac,0)\}$$
satisfies the diamond condition $(\diamond)$ where $\varphi(a^2)=0, \varphi(b^2)=0,\varphi(ab)=ba$ and $\varphi(ac)=0$ . The set $S$ and the set $\text{Irr}_S$ of irreducible paths in the algebra are given respectively by $S = \{ a^2, b^2, ab, ac\}$ and $\text{Irr}_S = \{e_1, e_2, a, b, c, ba, bc\} $, so $dim(A_1)=7$. The paths $a^2$ and $ab$ overlap at $a$ so $(aa)(ab)=a^2b\in S_3$. The set of one-overlaps is given as
$$S_3 = \{a^3, b^3, a^2b, ab^2, a^2c\}.$$
Notice that in the quiver $Q$, the path $a^2, b^2, ab \in S$ are all parallel to the irreducible paths $e_1=e, a, b, ba$ and the path $ac\in S$ is parallel to $c$ and $bc$.
Any element $\widetilde{\varphi}:kS\rightarrow A_1\cong k\text{Irr}_S$ viewed as $\widetilde{\varphi}\in\HHom(KS,A_1)\otimes(\tau)$ has the following general form
\begin{align*}
\widetilde{\varphi}(a^2) &= (\lambda_e + \lambda_a a + \lambda_b b +\lambda_{ba} ba)\tau \\
\widetilde{\varphi}(b^2) &=( \mu_e + \mu_a a + \mu_b b +\mu_{ba} ba) \tau\\
\widetilde{\varphi}(ab) &= (\nu_e + \nu_a a + \nu_b b +\nu_{ba} ba)\tau \\
\widetilde{\varphi}(ac) &=( w_c c + w_{bc} bc)\tau
\end{align*}
for scalars $\lambda_e,\lambda_a,\cdots, w_c, w_{bc}\in k$. By \cite[Corollary 7.37]{SZ}, $\widetilde{\varphi}$ is a Maurer-Cartan element if and only if for each $uvw\in S_3$ with $uv, vw\in S$, Equation \eqref{comb-star} holds. That is
$$(\pi(u)\star\pi(v))\star\pi(w) = \pi(u)\star(\pi(v)\star\pi(w))(mod\; \tau^2)$$
since we are considering first order deformations. We now check conditions on the scalars for the associativity of the star product. This product defined for example for $a,b\in A_1$ is given by
$$a\star b = \varphi(ab)+\widetilde{\varphi}(ab)\tau.$$ 
We check for all elements of $S_3$. For instance, the calculations involved in using $a^3$ to check that $a\star (a\star a) = (a\star a)\star a$ are the following.  
\begin{align*}
a\star (a\star a)  &= a\star (\varphi(a^2)+\widetilde{\varphi}(a^2)\tau) = a\star (\lambda_e + \lambda_a a + \lambda_b b +\lambda_{ba} ba)\tau \\
&= (\lambda_e\varphi(a) + \lambda_a \varphi(a^2) + \lambda_b \varphi(ab) +\lambda_{ba} \varphi(aba))\tau \\
&+  [\lambda_e\widetilde{\varphi}(a) + \lambda_a \widetilde{\varphi}(a^2) + \lambda_b \widetilde{\varphi}(ab) +\lambda_{ba} \widetilde{\varphi}(aba)]\tau^2\\
& =  (\lambda_ea + \lambda_b ba )\tau 
\end{align*}
and it is equal to  
\begin{align*}
(a\star a)\star a &= (\varphi(a^2)+\widetilde{\varphi}(a^2)\tau)\star a = (\lambda_e + \lambda_a a + \lambda_b b +\lambda_{ba} ba)\tau \star a\\
&= (\lambda_e\varphi(a) + \lambda_a \varphi(a^2) + \lambda_b \varphi(ba) +\lambda_{ba} \varphi(ba^2))\tau \\
&+  [\lambda_e\widetilde{\varphi}(a) + \lambda_a \widetilde{\varphi}(a^2) + \lambda_b \widetilde{\varphi}(ba) +\lambda_{ba} \widetilde{\varphi}(ba^2))\tau^2\\
& =  (\lambda_ea + \lambda_b ba )\tau.
\end{align*}
This then implies that $\lambda_e=\lambda_e$ and $\lambda_b=\lambda_b$. For $a^2b = a\star (a\star b) = (a\star a)\star b$, we obtain $ a\star (a\star b) = (\nu_ea + \nu_b ba)\tau$ and $ (a\star a)\star b = (\lambda_eb + \lambda_a ba)\tau$, so we get $\nu_e=\lambda_e=0$ and $\nu_b=\lambda_a$. Equivalent calculations for $ ab^2$ and $a^2c$ yield  $\mu_e=\nu_e=0$ and $\mu_b=\nu_a$ and $\lambda_e=\lambda_b=0$. We can now rewrite 
\begin{align*}
\widetilde{\varphi}(a^2) &=  (\lambda_a a  +\lambda_{ba} ba)\tau \\
\widetilde{\varphi}(b^2) &=  (\mu_a a + \mu_b b +\mu_{ba} ba)\tau \\
\widetilde{\varphi}(ab) &= (\mu_b a + \lambda_a b +\nu_{ba} ba)\tau \\
\widetilde{\varphi}(ac) &= (w_c c + w_{bc} bc)\tau 
\end{align*}
so the Maurer-Cartan elements of $\HH^2(A_1)$ are parametrized by 
$$\widetilde{\varphi}= (\lambda_a,\lambda_{ba}, \mu_a, \mu_b, \mu_{ba},\nu_{ba}, w_c, w_{bc})\in k^8.$$ 
Our next goal is to show that three of these parameters can be eliminated by a coboundary so that $\widetilde{\varphi}\in k^5$ and thus $dim(\HH^2(A_1))=5$. Let $\widetilde{\varphi'}$ be defined by
\begin{align*}
\widetilde{\varphi'}(a^2) &=  (\lambda_a' a  +\lambda_{ba}' ba)\tau, &&\widetilde{\varphi'}(b^2)=  (\mu_a' a + \mu_b' b +\mu_{ba}' ba)\tau \\
\widetilde{\varphi'}(ab) &= (\mu_b' a + \lambda_a' b +\nu_{ba}' ba)\tau, &&\widetilde{\varphi'}(ac) = (w_c' c + w_{bc}' bc)\tau 
\end{align*}
From \cite[Corollary 7.44]{SZ}, two cocycles $\widetilde{\varphi}$ and $\widetilde{\varphi}'$ are cohomologous or satisfy $\widetilde{\varphi}-\widetilde{\varphi} = \langle \Theta\rangle,$ $\Theta\in\HHom(kQ_1, k\text{Irr}_S)$ if 
$$T(\varphi(s))+\widetilde{\varphi}'(s) = T(s_1)\star\cdots\star T(s_m) ( mod\;\tau^2)$$
for some $T:k\text{Irr}_S[\tau]/(\tau^2)\rightarrow \text{Irr}_S[\tau]/(\tau^2)$ defined by $T(x)=x+\Theta(x)\tau$ with $s=s_1s_2\cdots s_m$ a path of length $m$. Any $\Theta\in\HHom(kQ_1, k\text{Irr}_S)$ has a general form
\begin{align*}
\Theta(a) &=  \alpha_e + \alpha_a a + \alpha_b b + \alpha_{ba}ba \\
\Theta(b) &=  \beta_e + \beta_a a + \beta_b b + \beta_{ba}ba \\
\Theta(c)  &= \gamma_c c +\gamma_{bc}bc,
\end{align*}
where $(a\star \Theta(a)) = \alpha_e a +\alpha_a\varphi(a^2) + \alpha_b\varphi(ab)+\alpha_{ba}\varphi(aba))+(\alpha_e\widetilde{\varphi}(a)+\alpha_a\widetilde{\varphi}(a^2) + \alpha_b\widetilde{\varphi}(ab) + \alpha_{ba}\widetilde{\varphi}(aba))\tau.$  Whenever $s=a^2$, then $T(\varphi(a^2))+ \widetilde{\varphi}'(a^2) = T(a)\star T(a)$ yields the following:
\begin{equation}\label{coho-check}
T(\varphi(a^2))+ \widetilde{\varphi'}(a^2) = (\lambda_a' a  +\lambda_{ba}' ba)\tau.
\end{equation}
\begin{align*}
& T(a)\star T(a) = (a+\Theta(a)\tau)\star (a+\Theta(a)\tau) \\
&= a\star a + (a\star \Theta(a))\tau + (\Theta(a)\star a)\tau + (\Theta(a)\star\Theta(a))\tau^2\\
&= (\lambda_a a  +\lambda_{ba} ba)\tau + (\alpha_e a + \alpha_b ba)\tau +  (\alpha_e a + \alpha_b ba)\tau + 0\\
&= (\lambda_a a  +\lambda_{ba} ba + 2\alpha_e a + 2\alpha_b ba)\tau
\end{align*}
and comparing with Equation \eqref{coho-check} we arrive at $\lambda_a' -\lambda_a = 2\alpha_e$ and $\lambda_{ba}' -\lambda_{ba}=2\alpha_b$. With other similar equivalent calculations on $s$ being $b^2, ab, ac$, we get
\begin{align*}
(a^2):&\lambda_a'  -\lambda_a = 2\alpha_e  &&\lambda_{ba}' -\lambda_{ba}=2\alpha_b  \\
(b^2):&\mu_a' - \mu_a =0 &&\mu_b' - \mu_b =2\beta_e   &&&\mu_{ba}' - \mu_{ba} = 2\beta_a \\
(ab): &\mu_b' - \mu_b =\beta_e && \lambda_a'  -\lambda_a = \alpha_e &&&\nu_{ba}' - \nu_{ba} = \alpha_a+\beta_b \\   
(ac):&w_c' - w_c =\alpha_e   && w_{bc}' - w_{bc} = \alpha_b
\end{align*}
This implies that three variables in the parametric definition of $\widetilde{\varphi}$ can be eliminated or simply  $\widetilde{\varphi}= (\lambda_a,\lambda_{ba}, \mu_a, \mu_b, \mu_{ba},\nu_{ba}, w_c, w_{bc})\in k^8$ is cohomologous to $\widetilde{\varphi}= (\lambda_a,\lambda_{ba}, \mu_a, \mu_b, 0 ,\nu_{ba}, 0 , 0)\in k^8.$
Therefore
$\widetilde{\varphi}$ is in $k^5$ or equivalently the dimension of $\HH^2(A_1)$ is equal to $5$.

\end{example}

\end{document}